\documentclass[12pt, reqno]{amsart}
\usepackage[T1]{fontenc}
\usepackage{amsfonts}
\usepackage{amsmath}
\usepackage{amsthm}
\usepackage{amssymb}
\usepackage{geometry}
\usepackage{graphicx}
\usepackage{xcolor}
\usepackage{mathtools}
\usepackage[colorlinks=true, linkcolor=blue]{hyperref}
\usepackage{cleveref}
\usepackage[english]{babel}
\usepackage{lineno}
\usepackage{float}

\textwidth16cm \textheight20cm \oddsidemargin-0.1cm
\evensidemargin-0.1cm

\newtheorem{theorem}{Theorem}[section]

\newtheorem{example}[theorem]{Example}
\newtheorem{corollary}[theorem]{Corollary}
\newtheorem{lemma}[theorem]{Lemma}

\newtheorem{proposition}[theorem]{Proposition}
\newtheorem{problem}[theorem]{Problem}

\usepackage{tikz}
\usetikzlibrary{positioning}
\usetikzlibrary{decorations,arrows}
\usetikzlibrary{decorations.markings}
\numberwithin{equation}{section}

\newcommand{\N}{\mathbb N}
\newcommand{\Z}{\mathbb Z}

\usepackage{rustic}
\usepackage[T1]{fontenc}

\tolerance=1
\emergencystretch=\maxdimen
\hyphenpenalty=10000
\hbadness=10000

\title{Discrete isoperimetric inequalities on the strong products of paths}
\author{Runze Wang}
\address[]{Department of Mathematical Sciences, University of Memphis, Memphis, TN 38152, USA}
\email{rwang6@memphis.edu; runze.w@hotmail.com}
\thanks{}
\date{\today}
\subjclass[2020]{05C35}

%\raggedbottom
\begin{document}

\sloppy

\begin{abstract}
    For a graph $G=(V,\ E)$ and a nonempty set $S\subseteq V$, the \emph{vertex boundary} of $S$, denoted by $\partial_G(S)$, is defined to be the set of vertices that are not in $S$ but have at least one neighbor in $S$. In this paper, for $G$ being a strong product of two paths, we determine the cases in which $|\partial_G(S)|$ is minimized.
\end{abstract}
\keywords{Isoperimetric inequality; Strong product; Path}

\maketitle

\section{Introduction}

In this paper, we only consider simple graphs, that is, undirected graphs without loops or multiedges.

For a graph $G=(V,\ E)$ and a nonempty set $S\subseteq V$, let the \emph{vertex boundary} of $S$, denoted by $\partial_G(S)$, be the set of vertices that are not in $S$ but have at least one neighbor in $S$. So we have
\begin{align*}
    \partial_G(S)=\{v\in V\setminus S: N(v)\cap S\neq \emptyset\}.
\end{align*}

Given that $|S|=k$, how small can $|\partial_G(S)|$ be? This problem is known as one of the \emph{discrete isoperimetric problems} on graphs, and it has attracted significant attention from many scholars. Wang and Wang \cite{WW}, as well as Bollob\'as and Leader \cite{BL2}, studied this problem on the grid graphs. We will further introduce their results later. Bollob\'as and Leader \cite{BL1} also studied this problem on the discrete toruses. Bobkov \cite{Bo} studied it on the discrete cubes (hypercubes), Bharadwaj and Chandran \cite{BC} studied it on trees, and Iamphongsai and Kittipassorn \cite{IK} studied it on the triangular grid graphs. Moreover, this problem was studied on the Cartesian products of graphs by Houdr\'e and Stoyanov \cite{HS} and Houdr\'e and Tetali \cite{HT}. In this paper, we focus on the strong products of paths. The compression method we use is derived from Bollob\'as and Leader's paper \cite{BL2}.

We define the \emph{infinite path} $P_{\N_0}$ to be the graph with vertex set $\N_0$ where two vertices $x$ and $y$ are adjacent if and only if $|x-y|=1$, and similarly define the \emph{double-sided infinite path} $P_\Z$ to be the graph with vertex set $\Z$ where two vertices $x$ and $y$ are adjacent if and only if $|x-y|=1$. The \emph{Cartesian product} of two graphs $G$ and $H$, denoted by $G\square H$, is the graph with vertex set $\{(v_1,\ v_2):\ v_1\in G,\ v_2\in H\}$ where two vertices $(v_1,\ v_2)$ and $(v_1',\ v_2')$ are adjacent if and only if either
\begin{center}
    $v_1=v_1'$ and $v_2v_2'\in E(H)$
\end{center}
or
\begin{center}
    $v_1v_1'\in E(G)$ and $v_2=v_2'$.
\end{center}
The Cartesian product of $n$ infinite paths, denoted by $\square_{i=1}^n P_{\N_0}$, can also be seen as a graph defined by letting the vertex set be $\{(v_1,\ v_2,\ ...,\ v_n):\ v_i\in \N_0,\ \forall\ 1\le i\le n\}$, and letting two vertices $(v_1,\ v_2,\ ...,\ v_n)$ and $(v_1',\ v_2',\ ...,\ v_n')$ be adjacent if $|v_j-v_j'|=1$ for some $1\le j\le n$, and $v_\ell=v_\ell'$ for any $1\le \ell\le n$ with $\ell\neq j$. Visually, this graph is just an $n$-dimensional grid.

The \emph{simplicial order} on the vertices in $\square_{i=1}^n P_{\N_0}$ (or $\square_{i=1}^n P_m$ with $P_m$ being a finite path on $m$ vertices) is defined by setting $(v_1,\ v_2,\ ...,\ v_n)<(v_1',\ v_2',\ ...,\ v_n')$ if either 
\begin{align*}
    \sum_{i=1}^n v_i<\sum_{i=1}^n v_i'
\end{align*}
or
\begin{center}
    $\sum_{i=1}^n v_i=\sum_{i=1}^n v_i'$; $v_j>v_j'$ for some $1\le j\le n$; $v_\ell=v_\ell'$ for any $1\le \ell\le j-1$.
\end{center}

%Note that, to define the simplicial order on $\square_{i=1}^n P_{\N_0}$ or $\square_{i=1}^n P_m$, by symmetry, it is equivalent to require "$v_j<v_j'$ for some $1\le j\le n$" instead of "$v_j>v_j'$ for some $1\le j\le n$".

The following result was proved by Wang and Wang \cite{WW} with an inductive argument on $n$ in 1977, and it was also proved by Bollob\'as and Leader \cite{BL2} using a compression method in 1991.

\begin{theorem}[Wang and Wang \cite{WW}, Bollob\'as and Leader \cite{BL2}]\label{thm1}
    Let $S$ be a set of $k$ vertices in $\square_{i=1}^n P_{\N_0}$. Then $|\partial_{\square_{i=1}^n P_{\N_0}}(S)|\ge |\partial_{\square_{i=1}^n P_{\N_0}}(S_0)|$, where $S_0$ is the set of the first $k$ vertices in the simplicial order.
\end{theorem}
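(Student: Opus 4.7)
My plan is to follow the compression method of Bollob\'as and Leader. For each coordinate $i\in\{1,\ldots,n\}$ define the \emph{$i$-compression} $C_i(S)$ by replacing, on every line $L$ parallel to the $i$-th axis, the intersection $S\cap L$ with the initial segment of $L$ of the same cardinality. The first step is to verify that $|C_i(S)|=|S|$ (which is immediate) and $|\partial(C_i(S))|\le|\partial(S)|$. I would split the boundary by the direction of the connecting edge: for direction $i$ itself, each non-empty $i$-line contributes at most one in-line boundary vertex after compression, which is no worse than before; for a direction $j\ne i$ and adjacent parallel $i$-lines $L,L'$ differing only in coordinate $j$, the $j$-boundary contribution equals $\max(0,|S\cap L|-|S\cap L'|)$ after compression, because the two initial segments are nested, and it is at least this much before compression by a direct inclusion-exclusion count.

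Iterating $C_1,\ldots,C_n$ until stable produces a set $S^*$ with $|S^*|=|S|$ and $|\partial(S^*)|\le|\partial(S)|$ that is a coordinate-wise down-set. This alone is not enough to force simpliciality: in $\square_{i=1}^{2} P_{\N_0}$ with $k=5$, both $\{(0,0),(1,0),(0,1),(2,0),(1,1)\}$ and $\{(0,0),(1,0),(0,1),(2,0),(0,2)\}$ are down-sets of size $5$, but only the first is the simplicial initial segment. So I would introduce a secondary pair-compression $D_{ij}$ ($i\ne j$) that, on every two-dimensional slice obtained by fixing all coordinates outside $\{i,j\}$, replaces the slice intersection with the simplicial initial segment of the planar grid of that slice. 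After showing $|D_{ij}(S)|=|S|$ and $|\partial(D_{ij}(S))|\le|\partial(S)|$, a common fixed point of all $C_i$ and all $D_{ij}$ can be identified, by a layer-by-layer inspection of the equal-sum shells, with the simplicial initial segment of size $k$.

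The main obstacle is the boundary monotonicity of $D_{ij}$. Inside a compressed slice, the boundary in directions $i$ and $j$ is controlled by the two-dimensional case of the theorem itself, which I would establish first, either directly or as the base case of an induction on $n$. The more delicate part is the cross-slice contribution from a third direction $\ell\notin\{i,j\}$: after simplicially compressing two adjacent parallel slices of sizes $a\le a'$, I would use the nesting property that the planar simplicial initial segment of size $a$ is contained in that of size $a'$ to conclude that the cross-slice boundary reduces to the ``shell'' of size $a'-a$, and hence is no larger than before the compression. Establishing this nesting and checking that no new cross-slice boundary vertices are created is the technical heart of the argument; once it is in hand, the rest is combinatorial bookkeeping about how the simplicial layers fill up in $\square_{i=1}^n P_{\N_0}$.
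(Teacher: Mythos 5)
First, a framing point: the paper does not prove Theorem \ref{thm1} at all --- it is quoted from Wang--Wang \cite{WW} and Bollob\'as--Leader \cite{BL2} --- so your proposal can only be measured against those proofs. Your plan is a variant of the Bollob\'as--Leader compression method, but with one-dimensional line compressions plus two-dimensional pair compressions $D_{ij}$ in place of their codimension-one compressions (which replace each $(n-1)$-dimensional slice by an initial segment of the $(n-1)$-dimensional simplicial order, inside an induction on $n$). Much of your sketch is sound in spirit: the monotonicity of $C_i$ does hold, though not by ``splitting the boundary by the direction of the connecting edge'' and summing --- a boundary vertex can be adjacent to $S$ in several directions, so the per-direction sum overcounts $|\partial(S)|$ and the comparison fails as stated. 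The correct bookkeeping is per fiber: on each line $L$ the compressed boundary has size $\max\bigl(1,\ \max_{L'\sim L}|S\cap L'|-|S\cap L|\bigr)$ (a maximum, not a sum), and each term separately lower-bounds $|\partial(S)\cap L|$; this is exactly the style of accounting in this paper's Lemma \ref{compression}. Similarly, the monotonicity of $D_{ij}$ needs not just nesting of planar initial segments but the stronger fact that $N[\mathcal{I}_a]$ is itself an initial segment, so that the within-slice and cross-slice boundary pieces union to a difference of nested initial segments and again contribute a maximum rather than a sum. And your two-dimensional base case is genuinely circular as set up: in dimension $2$ the only pair compression is the whole plane, so $D_{12}$ presupposes the theorem; the $2$-dimensional case needs its own argument (Wang--Wang's induction, or a vertex-relocation argument like the one this paper uses for the strong product).

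The fatal gap, however, is the final identification step. It is simply false that a common fixed point of all $C_i$ and all $D_{ij}$ must be the simplicial initial segment. Take $n=3$, $k=7$, and
\begin{align*}
    A=\{(0,0,0),\ (1,0,0),\ (0,1,0),\ (0,0,1)\}\cup \{(2,0,0),\ (1,1,0),\ (0,2,0)\}.
\end{align*}
Every line intersection is an initial segment, and every axis-parallel two-dimensional slice is a planar simplicial initial segment: the $z=0$ slice is the full triangle $\{x+y\le 2\}$, the $z=1$ slice is $\{(0,0)\}$, and each $xz$- and $yz$-slice is one of $\{(0,0)\}$, $\{(0,0),(1,0)\}$, or $\{(0,0),(1,0),(0,1),(2,0)\}$, each an initial segment of the planar simplicial order. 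So $A$ is fixed by all your compressions, yet the true initial segment of size $7$ is $\{(0,0,0),(1,0,0),(0,1,0),(0,0,1),(2,0,0),(1,1,0),(1,0,1)\}$, which contains $(1,0,1)$ rather than $(0,2,0)$. No ``layer-by-layer inspection of the equal-sum shells'' can identify $A$ with the initial segment, because they are different sets. This is precisely the known crux of the compression approach: Bollob\'as and Leader must close their proof with a separate, delicate lemma showing that sets compressed in every direction, even when not initial segments, still have boundary at least that of the initial segment. Your proposal omits this entire step, so as it stands the argument proves only that some fully compressed set attains the minimum, not that the simplicial initial segment does.
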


Furthermore, Bollob\'as and Leader \cite{BL2} extended this result to the Cartesian product of finite paths of equal length.

\begin{theorem}[Bollob\'as and Leader \cite{BL2}]\label{thm2}
    Let $S$ be a set of $k\le m^n$ vertices in $\square_{i=1}^n P_m$. Then $|\partial_{\square_{i=1}^n P_m}(S)|\ge |\partial_{\square_{i=1}^n P_m}(S_0)|$, where $S_0$ is the set of the first $k$ vertices in the simplicial order.
\end{theorem}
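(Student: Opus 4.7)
The plan is to extend the compression method used in the proof of Theorem~\ref{thm1} to the finite-path setting. For each coordinate direction $i\in\{1,\ldots,n\}$, define the $i$-compression $C_i$ on subsets of $\square_{i=1}^n P_m$ as follows: within every axis-parallel line $L$ in direction $i$, replace $S\cap L$ by the $|S\cap L|$ vertices of $L$ having the smallest $i$-th coordinate. This operation preserves $|S|$, and one verifies $|\partial(C_iS)|\le|\partial(S)|$ by decomposing the vertex boundary according to edge directions and analyzing each two-dimensional slice spanned by directions $i$ and $j\neq i$. This is essentially the same local argument as in the infinite case, since compression within a line of $m$ vertices is the same operation one performs in $P_{\N}$ restricted to the first $m$ positions.

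By iteratively applying $C_1,\ldots,C_n$ and repeating until no further change occurs (the potential $\sum_{v\in S}\sum_{i} v_i$ strictly decreases under any nontrivial compression, so the process terminates in finitely many steps), we obtain a set $S^*\subseteq\square_{i=1}^n P_m$ with $|S^*|=k$ and $|\partial(S^*)|\le|\partial(S)|$. Such a fully compressed $S^*$ is a \emph{downset} in the coordinate-wise order on $\square_{i=1}^n P_m$: if $v\in S^*$ and $v'\le v$ componentwise, then $v'\in S^*$.

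It remains to show that among all downsets of size $k$ in $\square_{i=1}^n P_m$, the initial segment $S_0$ in the simplicial order has minimum vertex boundary. I expect this to be the main obstacle. The tempting shortcut---embedding $S^*$ in $\square_{i=1}^n P_{\N_0}$ and invoking Theorem~\ref{thm1} directly---fails in general: when $k$ is close to $m^n$, the first $k$ simplicial-order vertices of $\square_{i=1}^n P_{\N_0}$ need not all lie in $\square_{i=1}^n P_m$, and moreover the boundary of $S^*$ computed in the infinite grid is strictly larger than its boundary in $\square_{i=1}^n P_m$, by exactly the set of vertices having some coordinate equal to $m$ that are adjacent to $S^*$. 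One must therefore work inside the finite grid. A natural strategy is a layer-by-layer exchange: write $L_s=\{v\in \square_{i=1}^n P_m:\sum_i v_i=s\}$, and if $S^*\neq S_0$, locate the smallest $s$ at which they differ; then, respecting the downset constraint, one attempts to replace a vertex of $S^*\cap L_s$ by an earlier simplicial-order vertex of $L_s\setminus S^*$ while monitoring the boundary. If such exchanges can be carried out without increasing the boundary, iteration drives $S^*$ to $S_0$ and completes the proof.
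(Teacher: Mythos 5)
First, a point of bookkeeping: the paper does not prove Theorem \ref{thm2} at all---it is quoted from Bollob\'as and Leader \cite{BL2}---so your proposal must stand on its own. It does not, and you have in fact flagged the failure point yourself. The first half is fine: line compressions $C_i$ within the finite grid do not increase the vertex boundary (your slice-by-slice argument is the same calculation as the paper's Lemma 2.4 for the strong product), the potential function guarantees termination, and the limit set $S^*$ is a downset. But this is a very weak structural conclusion: the family of downsets of size $k$ in $\square_{i=1}^n P_m$ is enormous, and the assertion ``among downsets, the simplicial initial segment minimizes the vertex boundary'' is not a remaining technicality---it is essentially the entire theorem. Your proposed layer-by-layer exchange is stated conditionally (``if such exchanges can be carried out''), and no argument is given that a downset-preserving single-vertex swap toward $S_0$ exists that never increases the boundary. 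That is precisely where the difficulty lives: the vertex-isoperimetric problem in the grid has extremal sets that are not initial segments, so any exchange chain can at best be non-strictly monotone, and a greedy swap can stall at or pass through configurations where the boundary would grow; nothing in your sketch rules this out, in any dimension $n\ge 2$.

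It is also worth recording how \cite{BL2} actually closes this gap, since it shows the missing idea is substantial. Their compressions are codimension-one, not one-dimensional: arguing by induction on $n$, for each fixed value $t$ of the $i$-th coordinate they replace the slice $\{v\in S: v_i=t\}$ by the initial segment of the \emph{simplicial order on the $(n-1)$-dimensional grid} of the same cardinality; the inductive hypothesis, together with the fact that initial segments of the simplicial order have nested neighborhoods, shows this does not increase the vertex boundary. A set compressed in every such direction is then far more rigid than a downset, and they prove it is either an initial segment of the simplicial order or belongs to a small, explicitly described exceptional family, which is handled by a direct case analysis. Your observation that one cannot simply embed into $\square_{i=1}^n P_{\N_0}$ and quote Theorem \ref{thm1} is correct, but replacing that shortcut requires this stronger compression-plus-classification machinery (or, in two dimensions, the kind of explicit exchange-and-rotation case analysis the present paper carries out in its proof of Lemma \ref{2d} for the strong product). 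As written, your proposal proves only the reduction to downsets, not the theorem.
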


In this paper, instead of the Cartesian products, we study the \emph{strong products}. The strong product of two graphs $G$ and $H$, denoted by $G\boxtimes H$, is the graph with vertex set $\{(v_1,\ v_2):\ v_1\in G,\ v_2\in H\}$ where two vertices $(v_1,\ v_2)$ and $(v_1',\ v_2')$ are adjacent if and only if 
\begin{center}
    $v_1=v_1'$ and $v_2v_2'\in E(H)$
\end{center}
or
\begin{center}
    $v_1v_1'\in E(G)$ and $v_2=v_2'$
\end{center}
or
\begin{center}
    $v_1v_1'\in E(G)$ and $v_2v_2'\in E(H)$.
\end{center}
The strong product of $n$ infinite paths, denoted by $\boxtimes_{i=1}^n P_{\N_0}$, can also be seen as a graph defined by letting the vertex set be $\{(v_1,\ v_2,\ ...,\ v_n):\ v_i\in \N_0,\ \forall\ 1\le i\le n\}$, and letting two vertices $(v_1,\ v_2,\ ...,\ v_n)\neq (v_1',\ v_2',\ ...,\ v_n')$ be adjacent if $|v_j-v_j'|\le 1$ for any $1\le j\le n$.

We define the \emph{box order} on the vertices in $\boxtimes_{i=1}^n P_{\N_0}$ by setting $(v_1,\ v_2,\ ...,\ v_n)<(v_1',\ v_2',\ ...,\ v_n')$ if either 
\begin{align*}
    \max_{1\le i\le n} \{v_i\}<\max_{1\le i\le n} \{v_i'\} 
\end{align*}
or
\begin{center}
    $\max_{1\le i\le n} \{v_i\}=\max_{1\le i\le n} \{v_i'\}$; $v_j<v_j'$ for some $1\le j\le n$; $v_\ell=v_\ell'$ for any $1\le \ell\le j-1$.
\end{center}

%Note that, if we substitute "$v_j<v_j'$ for some $1\le j\le n$" by "$v_j>v_j'$ for some $1\le j\le n$", then essentially we will get a different order, so it is necessary to use "$v_j<v_j'$" in this definition.

In \cite{VR}, Veomett and Radcliffe studied the strong product of double-sided infinite paths, and determined when the size of the vertex boundary of a $k$-element set in $V(\boxtimes_{i=1}^n P_\Z)$ is minimized. They also mentioned that a similar argument can be utilized to prove the following result about $\boxtimes_{i=1}^n P_{\N_0}$.

%Regarding when the size of the vertex boundary of a $k$-element set in $V(\boxtimes_{i=1}^n P_{\N_0})$ is minimized, Veomett and Radcliffe \cite{VR} proved the following result.

\begin{theorem}[Veomett and Radcliffe \cite{VR}]\label{thm3}
    Let $S$ be a set of $k$ vertices in $\boxtimes_{i=1}^n P_{\N_0}$. Then $|\partial_{\boxtimes_{i=1}^n P_{\N_0}}(S)|\ge |\partial_{\boxtimes_{i=1}^n P_{\N_0}}(S_0)|$, where $S_0$ is the set of the first $k$ vertices in the box order.
\end{theorem}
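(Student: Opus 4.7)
The approach is to adapt the compression technique of Bollob\'as--Leader used to prove Theorems~\ref{thm1} and~\ref{thm2}, and extended by Veomett--Radcliffe to the two-sided strong product $\boxtimes_{i=1}^n P_\Z$, to the one-sided infinite grid. The natural compression in this setting pushes elements of $S$ toward the corner $(0,\ldots,0)$.

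For each $1 \le i \le n$ define the coordinate compression $C_i$ as follows: on each axis-parallel line $L_i(c) = \{(c_1,\ldots,c_{i-1},t,c_{i+1},\ldots,c_n) : t \in \N_0\}$, replace $S \cap L_i(c)$ with the $|S \cap L_i(c)|$ vertices of $L_i(c)$ having smallest $i$-th coordinate. The central claim is that $|\partial(C_i(S))| \le |\partial(S)|$ for every $i$. I plan to prove this by fixing all coordinates other than $i$ and a second coordinate $j$, and working in the resulting two-dimensional slab. The induced graph on such a slab is the (corner-truncated) king's graph on $\N_0^2$, and a direct case analysis on pairs of adjacent columns, tracking both axial and diagonal adjacencies, yields the slab-wise inequality. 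Summing over slabs gives the full inequality for $C_i$.

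Since each nontrivial application of some $C_i$ strictly decreases $\sum_{v \in S}\|v\|_1$, iterating $C_1,\ldots,C_n$ terminates at a fixed set $S^*$ of size $k$ that is a downset in the componentwise partial order on $\N_0^n$, with $|\partial S^*| \le |\partial S|$. It remains to show that any such downset can be further transformed into the box-initial segment $S_0$ without increasing $|\partial|$. For this, I would decompose by the shells $B_\ell = \{v : \max_i v_i = \ell\}$ and argue first that we may assume $S^*$ contains every shell $B_0, B_1, \ldots, B_{\ell-1}$ for the largest occupied shell index $\ell$ --- by relocating elements from higher shells down into vacancies in lower shells --- and second that within the topmost occupied shell $B_\ell$, the intersection $S^* \cap B_\ell$ may be replaced by its lex-initial segment of the same cardinality. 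Together these two shell-level reductions turn $S^*$ into $S_0$.

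\textbf{The main obstacle} is the boundary inequality for the coordinate compressions. In the Cartesian setting, adjacency is purely axial and the standard two-line comparison suffices; in the strong product, diagonal adjacencies simultaneously couple columns offset by $\pm 1$ in several coordinates, so the compression of one line can have a subtler effect on neighboring lines. Handling this interplay uniformly --- especially near the corner of $\N_0^n$, where the neighborhood of a vertex is truncated and is smaller than $3^n-1$ --- will be the technical heart of the proof.
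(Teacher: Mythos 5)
Your two-stage outline (compress coordinatewise to a downset, then massage the downset into the box-order initial segment) is the same skeleton as the paper's argument --- though note the paper itself only proves the case $n=2$ (Lemma \ref{2d}); Theorem \ref{thm3} for general $n$ is quoted from \cite{VR} without proof. Within that skeleton there are two problems. The smaller one is your slab-wise proof of the compression inequality $|\partial(C_i(S))|\le|\partial(S)|$: in the strong product, the trace of $N[S]$ on a line $L_i(c)$ depends on \emph{all} lines $L_i(c')$ with $\|c'-c\|_\infty\le 1$, i.e.\ on up to $3^{n-1}$ lines, and these couplings do not decompose into two-dimensional $(i,j)$-slabs (each line lies in $n-1$ different slabs, and diagonal adjacencies cross slab boundaries), so ``summing over slabs'' does not assemble into the global inequality. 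This gap is easily repaired, and the repair is simpler than your plan: writing $S_{c'}$ for the set of $i$-th coordinates of $S\cap L_i(c')$, after compression the trace of $N[C_i(S)]$ on $L_i(c)$ is an initial segment of length $\max_{\|c'-c\|_\infty\le1}\bigl(|S_{c'}|+1\bigr)$ (zero if all these $S_{c'}$ are empty), whereas the trace of $N[S]$ on $L_i(c)$ contains $S_{c'}\cup S_{c'}^+$ for every such $c'$ and so is at least that long; summing over lines finishes it. This is exactly the paper's Lemma \ref{compression} with the three neighboring columns replaced by $3^{n-1}$ neighboring lines, and it is routine --- contrary to your assessment, this is not the technical heart.

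The serious gap is the second stage, which you present as two routine shell-level reductions. Relocating single vertices ``from higher shells down into vacancies in lower shells'' can strictly \emph{increase} the boundary, already for $n=2$: take $S$ to be the $2\times h$ rectangle consisting of columns $0$ and $1$ with height $h$ (a downset, $|N[S]|=3h+3$); moving its top vertex $(1,\ h-1)$ to the lower-shell vacancy $(2,\ 0)$ gives $|N[S]|=3h+4$. The paper's proof of Lemma \ref{2d} runs into precisely this configuration (its Case 2 and Subcase II.ii) and escapes it not by a local exchange but by a global move: it transposes the whole overhang sitting above a $j\times t$ rectangle and re-attaches it to the right, which leaves $|N[S]|$ exactly unchanged. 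That trick exploits the $x\leftrightarrow y$ reflection symmetry of $\N_0^2$ and has no obvious analogue when $n\ge 3$, where the overhang is an $(n-1)$-dimensional object and no single reflection maps it into the required position. So the step you dismiss in one clause --- turning an arbitrary downset into the box-order initial segment without increasing the boundary --- is exactly where all the difficulty lies, and your proposal contains no mechanism (no substitute for the rotation trick, no induction on $n$, no alternative exchange scheme) to carry it out in dimension $n\ge 3$. As written, the plan would prove the $n=2$ case only after importing the paper's Case 2 argument, and it does not yield Theorem \ref{thm3} in general.
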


Whereas the result on $\square_{i=1}^n P_{\N_0}$ in Theorem \ref{thm1} can be directly extended to the result on $\square_{i=1}^n P_m$ in Theorem \ref{thm2}, the result on $\boxtimes_{i=1}^n P_{\N_0}$ in Theorem \ref{thm3} cannot be extended to $\boxtimes_{i=1}^n P_m$, or more generally, $\boxtimes_{i=1}^n P_{m_i}$. For example, suppose we have four vertices in $P_4\boxtimes P_4$. The set of the first four vertices in the box order is $\{(0,\ 0),\ (0,\ 1),\ (1,\ 0),\ (1,\ 1)\}$, and its vertex boundary is $\{(0,\ 2),\ (1,\ 2),\ (2,\ 2),\ (2,\ 1),\ (2,\ 0)\}$, which has five elements. But if we take the four-element set to be $\{(0,\ 0),\ (1,\ 0),\ (2,\ 0),\ (3,\ 0)\}$, then the vertex boundary is $\{(0,\ 1),\ (1,\ 1),\ (2,\ 1),\ (3,\ 1)\}$, which only has four elements.

Studying the general $n$-dimensional case about $\boxtimes_{i=1}^n P_{m_i}$ appears to be very challenging. In this paper, we handle the $2$-dimensional case. For $x,\ y,\ k\in \N$ with $2\le x\le y$ and $k\le xy$, we determine the cases in which a $k$-element set in $V(P_x\boxtimes P_y)$ has the minimum vertex boundary.

For $x,\ y,\ k\in \N$ with $2\le x\le y$ and $k\le xy$, let 
\begin{align*}
    I_1&=[1,\ (x-1)^2], \\
    I_2&=[x,\ x(y-1)],
\end{align*}
\[
    I_3=\begin{cases}
        [xy-(x-1)^2-(x-1),\ xy] & if\ y\ge x+1, \\
        [xy-(x-1)^2,\ xy] & if\ y=x,
    \end{cases}
\]
and let
    \[
        \alpha_1=\begin{cases}
            2\sqrt{k}+1 & if\ \sqrt{|S|}=\sqrt{k}\in \N, \\
            2\lfloor\sqrt{k}\rfloor+2 & if \lfloor\sqrt{k}\rfloor^2+1\le k\le \lfloor\sqrt{k}\rfloor^2+\lfloor\sqrt{k}\rfloor, \\
            2\lfloor\sqrt{k}\rfloor+3 & if \lfloor\sqrt{k}\rfloor^2+\lfloor\sqrt{k}\rfloor+1\le k\le \lfloor\sqrt{k}\rfloor^2+2\lfloor\sqrt{k}\rfloor,
        \end{cases}
    \]

    \[
        \alpha_2=\begin{cases}
            x & if\ x\mid k, \\
            x+1 & if\ x\nmid k,
        \end{cases}
    \]

    \[
        \alpha_3=\begin{cases}
            2\sqrt{K}-1 & if\ \sqrt{K}\in \N, \\
            2\lfloor\sqrt{K}\rfloor & if \lfloor\sqrt{K}\rfloor^2+1\le K\le \lfloor\sqrt{K}\rfloor^2+\lfloor\sqrt{K}\rfloor, \\
            2\lfloor\sqrt{K}\rfloor+1 & if \lfloor\sqrt{K}\rfloor^2+\lfloor\sqrt{K}\rfloor+1\le K\le \lfloor\sqrt{K}\rfloor^2+2\lfloor\sqrt{K}\rfloor,
        \end{cases}
    \]
    where $K:=xy-k$.

\begin{theorem}\label{finite}
We have
\begin{align*}
    \min_{|S|=k}|\partial_{P_x \boxtimes P_y}(S)|=\min_{i\in \mathcal{A}_k}\{\alpha_i\},
\end{align*}
where $\mathcal{A}_k$ is the subset of $\{1,\ 2,\ 3\}$ such that $k\in I_i$ for $i\in \mathcal{A}_k$ and $k\notin I_j$ for $j\notin \mathcal{A}_k$.
\end{theorem}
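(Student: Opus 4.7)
The plan is to prove Theorem \ref{finite} in two parts: an upper bound via explicit constructions and a matching lower bound obtained by compressing an arbitrary $k$-set to a Young-diagram shape. For the upper bound, for each $i \in \mathcal{A}_k$ I would exhibit a set $S$ of size $k$ with $|\partial_{P_x \boxtimes P_y}(S)| = \alpha_i$. For $k \in I_1$, writing $k = a^2 + r$ with $a = \lfloor \sqrt{k} \rfloor$ and $0 \le r \le 2a$, I place an $a \times a$ square at the corner $(0,0)$ and append the extra $r$ vertices either along row $a$ (if $r \le a$) or along both row $a$ and column $a$ (if $r > a$); a direct column-by-column count confirms $|\partial S| = \alpha_1$. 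For $k \in I_2$, write $k = bx + r$ with $0 \le r < x$ and take the union of $b$ full rows together with the initial $r$ vertices of row $b$; the boundary consists of $x - r$ vertices in row $b$ plus (when $r > 0$) $r+1$ vertices in row $b+1$, giving $x$ or $x+1$. For $k \in I_3$, take the complement of an analogous corner construction placed in the corner $(x-1,y-1)$, so that $|V \setminus S| = K$ and $|\partial S|$ is the ``inner shell'' of this complement, matching $\alpha_3$.

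For the lower bound, I would adapt the compression method of Bollob\'as and Leader \cite{BL2}. Define the \emph{left-compression} $L$ which, for each row $j$, replaces $\{i : (i,j) \in S\}$ by its initial segment $\{0, 1, \ldots, r_j - 1\}$, and analogously the \emph{down-compression} $D$. The key technical lemma is that neither $L$ nor $D$ increases the vertex boundary; the proof compares, a pair of consecutive rows (resp.\ columns) at a time, the boundary contribution before and after compression, carefully tracking the diagonal adjacencies of the strong product via a local swap argument. Iterating $L$ and $D$ produces a fixed point $S'$ with $|S'| = |S|$ and $|\partial S'| \le |\partial S|$ that is both left- and down-compressed, hence $S' = \{(i,j) : j < c_i\}$ for a non-increasing sequence $c_0 \ge c_1 \ge \cdots \ge c_{x-1} \ge 0$, i.e., a Young diagram.

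For such a Young diagram $S'$, the boundary is computable in closed form from the sequence $(c_i)$, splitting into four cases according to whether any columns are full ($c_i = y$) or empty ($c_i = 0$). Minimizing $|\partial S'|$ over $\sum c_i = k$ in each case yields exactly three candidate shapes: a near-square in the corner (giving $\alpha_1$), consecutive full rows with a possible partial row (giving $\alpha_2$), and the complement of a near-square (giving $\alpha_3$). Matching these regimes against $I_1, I_2, I_3$ and verifying that every $k \in [1, xy]$ lies in at least one $I_i$ then closes the argument. I expect the main obstacle to be (i) the compression lemma, where the diagonal adjacencies complicate the standard row-pair swap proof used in the Cartesian case; and secondarily (ii) the bookkeeping in the Young-diagram optimization, particularly reconciling the complement case with the two sub-regimes of $I_3$ ($y = x$ versus $y \ge x+1$).
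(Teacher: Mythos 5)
Your proposal is correct in outline and shares the paper's backbone more than you might expect: compression to a doubly-compressed (Young-diagram) set, the same three extremal constructions, and in fact the same case division. Your four cases for the profile $c_0\ge c_1\ge \cdots \ge c_{x-1}$ (``some column full / some column empty'') are precisely the paper's trichotomy on the corner vertices, since for a compressed set one has $(x-1,\ 0)\in S$ if and only if no column is empty, and $(0,\ y-1)\in S$ if and only if some column is full. Where you genuinely differ is the finishing step. The paper never writes $|\partial_{P_x\boxtimes P_y}(S)|$ in closed form and optimizes over profiles; instead it reduces the ``neither corner'' case to the infinite-grid Lemma \ref{2d}, reduces the ``both corners'' case via the complementation identity $\partial_{P_x\boxtimes P_y}(S)=\partial^{in}_{P_x\boxtimes P_y}(\overline{S})$ to the inner-boundary Lemma \ref{2din}, and settles the remaining case by a vertex-swap argument pushing $S$ to the first $k$ vertices in the co-lexicographic order. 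Your direct optimization is viable: for a nonincreasing profile the strong-product adjacency gives $\max\{c_{i-1},\ c_i,\ c_{i+1}\}=c_{i-1}$, so $|N[S']|$ really is a closed-form sum of terms of the shape $\min\{y,\ c_{i-1}+1\}$, and minimizing this sum is elementary. What your route buys is self-containedness (no appeal to the infinite-grid lemmas, whose proofs occupy Section 2 of the paper); what the paper's route buys is that the fiddly optimization is replaced by local swap moves whose effect on $|N[S]|$ is checked one vertex at a time. Also, your anticipated obstacle (i) is misplaced: in the strong product the diagonal adjacencies make the compression lemma \emph{easier} than in the Cartesian case, because the closed neighborhood of column $i$ is just $U_{i-1}\cup U_i\cup U_{i+1}$ smeared by $\pm 1$, and one compares column sizes directly as in the paper's Lemma \ref{compression}; no row-pair swap analysis is needed.

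One step you gloss over, and it is exactly where the paper spends its Part 5: matching the ``three candidate shapes'' to $I_1$, $I_2$, $I_3$ requires cross-case dominations, not just attainability. For $(x-1)^2<k\le (x-1)(y-1)$ the optimum within your empty-column/no-full-column case is not $\alpha_1$ (the near-square no longer fits in the grid) but is at least $2x$, and one must check this is dominated by $\alpha_2\le x+1$; symmetrically, in the both-corners case with $x+y-1\le k\le xy-(x-1)^2-(x-1)-1$ the within-case bound is at least $2x-1$ and is again dominated by $\alpha_2$; and the case with both a full and an empty column (the paper's Part 3, bound $y$ or $y+1$) is always dominated by $\alpha_2$ because $x\le y$ and $[y,\ y(x-1)]\subseteq [x,\ x(y-1)]$. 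Your closing sentence about ``matching these regimes'' needs exactly this bookkeeping, including the $y=x$ versus $y\ge x+1$ split in the definition of $I_3$, which you did flag.
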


To help readers better understand, we present an illustrative example here.

\begin{example}
    Let $x=5$ and $y=6$. Then $I_1=[1,\ 16]$, $I_2=[5,\ 25]$, and $I_3=[10,\ 30]$.
    \begin{itemize}
        \item If we take $k=4$, then $\mathcal{A}_4=\{1\}$ and $\alpha_1=5$, so $\min_{|S|=4}|\partial_{P_5 \boxtimes P_6}(S)|=5$.
        \item If we take $k=13$, then $\mathcal{A}_{13}=\{1,\ 2,\ 3\}$, $\alpha_1=9$, $\alpha_2=6$, and $\alpha_3=8$, so $\min_{|S|=13}|\partial_{P_5 \boxtimes P_6}(S)|=6$.
        \item If we take $k=23$, then $\mathcal{A}_{23}=\{2,\ 3\}$, $\alpha_2=6$, and $\alpha_3=5$, so $\min_{|S|=23}|\partial_{P_5 \boxtimes P_6}(S)|=5$.
    \end{itemize}
\end{example}
    Figure \ref{pic} illustrates sets with minimum boundaries, where the vertices in the $k$-element subsets are black, the vertices in the vertex boundaries are yellow, and other vertices are white.
    
    \begin{figure}[H]
        \includegraphics[width=0.8\linewidth]{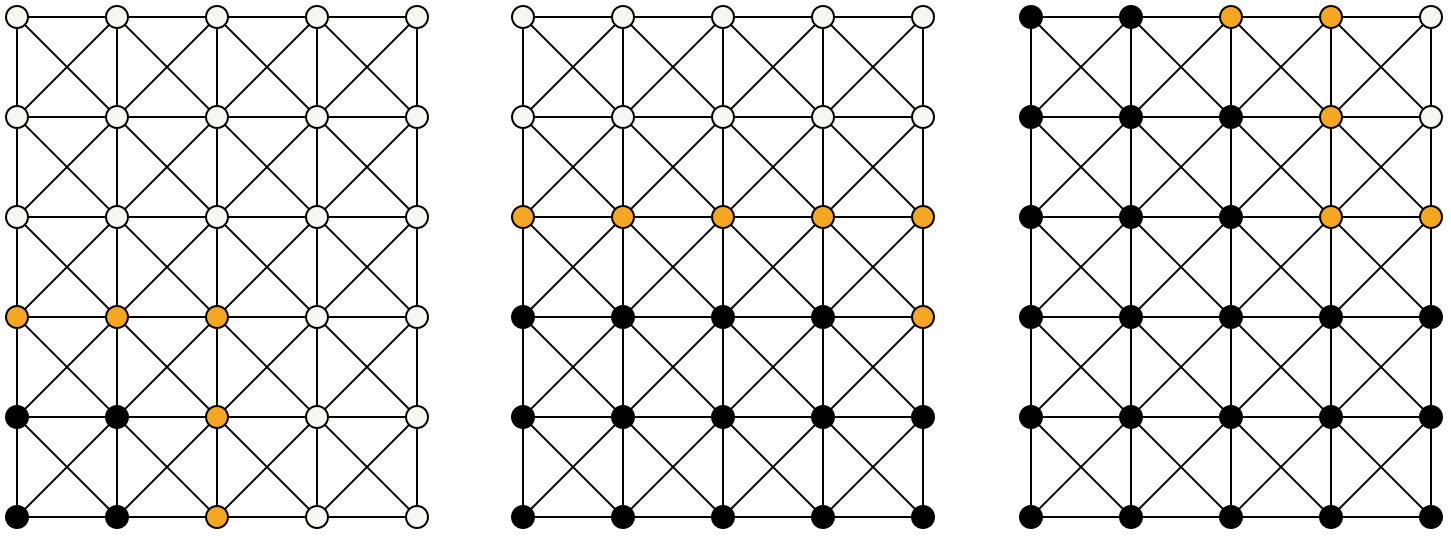}
        \caption{$k=4$, $k=13$, and $k=23$.}
        \label{pic}
    \end{figure}

Using the result in Theorem \ref{finite}, we can prove the following corollary about the strong product of a finite path and an infinite path.

\begin{corollary}\label{cor}
    We have
    \[
    \min_{|S|=k}|\partial_{P_x \boxtimes P_{\N_0}}(S)|=\begin{cases}
        \alpha_1 & if\ k\le x-1, \\
        \min\{\alpha_1,\ \alpha_2\} & if\ x\le k\le (x-1)^2, \\
        \alpha_2 & if\ k\ge (x-1)^2+1.
    \end{cases}
    \]
\end{corollary}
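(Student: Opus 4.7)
The plan is to deduce the corollary from Theorem \ref{finite} by embedding $P_x \boxtimes P_{\N_0}$ as the limit of the grids $P_x \boxtimes P_y$ as $y \to \infty$. For any finite $S \subseteq V(P_x \boxtimes P_{\N_0})$ with $|S|=k$, let $q$ denote the largest second coordinate appearing in $S$. For any $y \ge q+2$ we have $S \subseteq V(P_x \boxtimes P_y)$, and since $S$ is disjoint from the top row of $P_x \boxtimes P_y$, every vertex of $\partial_{P_x \boxtimes P_{\N_0}}(S)$ lies in $V(P_x \boxtimes P_y)$. Hence $|\partial_{P_x \boxtimes P_{\N_0}}(S)| = |\partial_{P_x \boxtimes P_y}(S)|$, and Theorem \ref{finite} yields $|\partial_{P_x \boxtimes P_y}(S)| \ge \min_{i \in \mathcal{A}_k} \alpha_i$, where $\mathcal{A}_k$ is computed for the ambient $P_x \boxtimes P_y$.

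Next I determine $\mathcal{A}_k$ for $y$ sufficiently large. As $y$ grows, the left endpoint of $I_3$ tends to infinity, so $k \notin I_3$; and $I_2 = [x,\ x(y-1)]$ contains $k$ whenever $k \ge x$. A direct case check then shows that $\mathcal{A}_k$ stabilizes: if $k \le x-1$ then $k \in I_1$ only, and $\mathcal{A}_k = \{1\}$; if $x \le k \le (x-1)^2$ then $k \in I_1 \cap I_2$, and $\mathcal{A}_k = \{1,2\}$; if $(x-1)^2 < k$ then $k \in I_2$ only (using $(x-1)^2 + 1 \ge x$ for $x \ge 2$), and $\mathcal{A}_k = \{2\}$. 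In each of the three cases, $\min_{i \in \mathcal{A}_k} \alpha_i$ matches the RHS claimed in the corollary, which establishes the lower bound.

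For the matching upper bound I exhibit explicit $k$-element sets in $V(P_x \boxtimes P_{\N_0})$ realizing each of $\alpha_1$ and $\alpha_2$. A corner box anchored at $(0,0)$, namely $\{0,\ldots,\ell-1\}^2$ when $k = \ell^2$ and such a square augmented by a partial column in the two intermediate sub-cases, has vertex boundary of size $\alpha_1$. A strip of width $x$ consisting of $\lfloor k/x \rfloor$ full rows plus, when $x \nmid k$, a contiguous block of $k - x \lfloor k/x \rfloor$ vertices in the next row, has vertex boundary of size $x$ or $x+1$, which is $\alpha_2$. Both constructions lie at the bottom of $P_x \boxtimes P_{\N_0}$, so their $P_x \boxtimes P_{\N_0}$-boundaries coincide with their $P_x \boxtimes P_y$-boundaries for every sufficiently large $y$ and so realize $\alpha_1$ and $\alpha_2$ in $P_x \boxtimes P_{\N_0}$.

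The main (routine) obstacle is the case analysis in the second paragraph for the stabilization of $\mathcal{A}_k$ as $y \to \infty$, together with the direct boundary computations for the corner-box and strip constructions that verify they indeed achieve $\alpha_1$ and $\alpha_2$. Once these are in place, combining the lower and upper bounds yields the three-case formula, with the middle case being $\min(\alpha_1,\alpha_2)$ exactly when both families of constructions are admissible.
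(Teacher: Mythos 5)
Your proposal is correct and takes essentially the same route as the paper: both deduce the corollary from Theorem \ref{finite} by truncating $P_x \boxtimes P_{\N_0}$ to a finite grid $P_x \boxtimes P_y$ tall enough that the boundary is unaffected and $k \notin I_3$, and then compare $I_1$ and $I_2$ (using $(x-1)(x-2)\ge 0$ to get the three stabilized cases). The only cosmetic differences are that you let $y$ depend on $S$ via its largest second coordinate, whereas the paper first reduces to compressed sets so that a uniform height $r$ depending only on $k$ suffices, and that you re-verify attainability of $\alpha_1$ and $\alpha_2$ with explicit corner-box and bottom-strip constructions rather than inheriting the extremal sets directly from Theorem \ref{finite}.
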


In the proof of Theorem \ref{finite}, we need to use the following two lemmas, where Lemma \ref{2d} is the $2$-dimensional case of Theorem \ref{thm3}.

\begin{lemma}\label{2d}
    Let $S$ be a set of $k$ vertices in $P_{\N_0} \boxtimes P_{\N_0}$. Then $|\partial_{P_{\N_0} \boxtimes P_{\N_0}}(S)|\ge |\partial_{P_{\N_0} \boxtimes P_{\N_0}}(S_0)|$, where $S_0$ is the set of the first $k$ vertices in the box order. Precisely, we have that 
    \[
        |\partial_{P_{\N_0} \boxtimes P_{\N_0}}(S)|\ge\begin{cases}
            2\sqrt{k}+1 & if\ \sqrt{|S|}=\sqrt{k}\in \N, \\
            2\lfloor\sqrt{k}\rfloor+2 & if \lfloor\sqrt{k}\rfloor^2+1\le k\le \lfloor\sqrt{k}\rfloor^2+\lfloor\sqrt{k}\rfloor, \\
            2\lfloor\sqrt{k}\rfloor+3 & if \lfloor\sqrt{k}\rfloor^2+\lfloor\sqrt{k}\rfloor+1\le k\le \lfloor\sqrt{k}\rfloor^2+2\lfloor\sqrt{k}\rfloor,
        \end{cases}
    \]
    where the equality can be attained when $S=S_0$.
\end{lemma}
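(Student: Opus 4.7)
The plan is to prove the inequality via a compression argument and to compute $|\partial(S_0)|$ by a case analysis; although the inequality is the $n=2$ case of Theorem \ref{thm3}, the explicit formula still has to be verified directly. For the inequality, I would adapt the compression method of Bollob\'as--Leader \cite{BL2}. Define a row compression $\mathcal{R}$ that replaces each row $S \cap (\N_0 \times \{b\})$ by the initial segment $\{0,\ldots,r_b-1\}\times\{b\}$ (with $r_b=|S\cap(\N_0\times\{b\})|$), and a column compression $\mathcal{C}$ defined symmetrically. Show that neither operator increases the king-graph boundary, by constructing, for each row $b$, an explicit injection from the row-$b$ part of $\partial(\mathcal{R}(S))$ into the row-$b$ part of $\partial(S)$. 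Iterating $\mathcal{R}$ and $\mathcal{C}$ terminates in a Young-diagram-shaped set $S^\ast$; a final swap argument among Young diagrams identifies $S_0$ as the minimiser of the boundary among sets of size $k$.

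For the explicit formula, write $k=m^2+r$ with $0\le r\le 2m$, so that $m=\lfloor\sqrt{k}\rfloor$. The three branches of the formula correspond to $r=0$, $1\le r\le m$, and $m+1\le r\le 2m$. In the first case $S_0=[0,m-1]^2$ and $\partial(S_0)$ is precisely the L-shell $L_m:=\{(i,j):\max(i,j)=m,\ 0\le i,j\le m\}$, of size $2m+1$. In the second case $S_0=[0,m-1]^2\cup\{(0,m),\ldots,(r-1,m)\}$; the $r$ added cells are absorbed from $L_m$ and introduce the $r+1$ new row-$(m+1)$ vertices $(0,m+1),\ldots,(r,m+1)$, giving $(2m+1-r)+(r+1)=2m+2$. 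In the third case $r=m+\ell$ and $S_0=([0,m-1]\times[0,m])\cup(\{m\}\times[0,\ell-1])$; the rectangle $[0,m-1]\times[0,m]$ has boundary of size $2m+2$, the $\ell$ absorbed cells are removed from it, and the $\ell+1$ new column-$(m+1)$ vertices $(m+1,0),\ldots,(m+1,\ell)$ are added, giving $(2m+2-\ell)+(\ell+1)=2m+3$.

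The main obstacle will be establishing the compression-monotonicity step. Unlike the Cartesian product, the strong product couples three consecutive rows through diagonal edges, so a row-shift can affect boundary vertices in two other rows simultaneously. Making the injection $\partial(\mathcal{R}(S))\hookrightarrow\partial(S)$ rigorous requires a careful case analysis at the rightmost occupied cell of each row, where the shifted configuration can fail to retain the same adjacency witnesses that the original had. Everything else, namely the Young-diagram reduction, the swap step, and the three-case boundary count, is routine bookkeeping once the compression lemma is in hand.
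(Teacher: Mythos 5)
Your overall strategy coincides with the paper's: down/left compressions, reduction to compressed (Young-diagram) sets, identification of the box-order initial segment among these, and the same three-case count (your computations of $|\partial(S_0)|$ in all three cases are correct and match the paper's). However, you have misjudged where the difficulty lies. The compression-monotonicity step that you flag as the main obstacle is in fact immediate, with no injection or case analysis at rightmost cells needed: in the strong product, row $b$ of $N[S]$ is exactly the union of the $\pm 1$ vertical dilations of rows $b-1$, $b$, $b+1$; after row compression each of these is an initial segment of $\N_0$, so the row-$b$ part of $N[\mathcal{R}(S)]$ has size exactly $1+\max\{r_{b-1},\ r_b,\ r_{b+1}\}$, while the row-$b$ part of $N[S]$ has size at least $\max\{r_{b-1},\ r_b,\ r_{b+1}\}+1$, since a nonempty row together with its upward shift already has size $r_j+1$. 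This is precisely the paper's five-line proof of Lemma \ref{compression}.

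The genuine gap is the step you dismiss as routine bookkeeping: the ``final swap argument among Young diagrams.'' If ``swap'' means single-cell exchanges that never increase the boundary, the argument provably stalls. For a compressed set with column heights $y_0\ge y_1\ge \cdots \ge y_I\ge 1$ one computes $|N[S]|=k+y_0+I+2$, that is, $|\partial(S)|=h+w+1$ where $h$ is the height and $w$ the width of the bounding box. Now take $k=12$ and the $2\times 6$ rectangle: its boundary is $9$, while the optimum is $2\lfloor\sqrt{12}\rfloor+2=8$; yet every single-cell move (even followed by re-compression) produces height profiles such as $(6,\ 5,\ 1)$ or $(7,\ 5)$, all with $h+w+1\ge 10$. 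Widening a rectangle with at least two columns forces a strict temporary increase of $h+w$, so a monotone greedy exchange cannot reach the quasi-square. This is exactly why the paper's proof requires a non-local move: its Case 2 (and Subcase II.ii) rotates and reflects the entire portion of $S$ above the square and reattaches it on the right, a transformation that preserves $|N[S]|$. To repair your plan, either include such a global move, or, more cleanly, bypass exchanges altogether: from $|\partial(S)|=h+w+1$ and $hw\ge k$, minimizing $h+w$ over positive integers subject to $hw\ge k$ yields the three-branch formula directly, with $S_0$ attaining the minimum; this shortcut is actually simpler than the paper's exchange argument.
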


For a graph $G=(V,\ E)$ and a set $S\subseteq V$, let the \emph{inner vertex boundary} of $S$, denoted by $\partial_G^{in}(S)$, be the set of vertices in $S$ that have at least one neighbor outside $S$. So we have
\begin{align*}
    \partial_G^{in}(S)=\{v\in S: N(v)\cap(V\setminus S)\neq \emptyset\}.
\end{align*}

\begin{lemma}\label{2din}
    Let $S$ be a set of $k$ vertices in $P_{\N_0} \boxtimes P_{\N_0}$. Then $|\partial_{P_{\N_0} \boxtimes P_{\N_0}}^{in}(S)|\ge |\partial_{P_{\N_0} \boxtimes P_{\N_0}}^{in}(S_0)|$, where $S_0$ is the set of the first $k$ vertices in the box order. Precisely, we have that 
    \[
        |\partial_{P_{\N_0} \boxtimes P_{\N_0}}^{in}(S)|\ge\begin{cases}
            2\sqrt{k}-1 & if\ \sqrt{|S|}=\sqrt{k}\in \N, \\
            2\lfloor\sqrt{k}\rfloor & if \lfloor\sqrt{k}\rfloor^2+1\le k\le \lfloor\sqrt{k}\rfloor^2+\lfloor\sqrt{k}\rfloor, \\
            2\lfloor\sqrt{k}\rfloor+1 & if \lfloor\sqrt{k}\rfloor^2+\lfloor\sqrt{k}\rfloor+1\le k\le \lfloor\sqrt{k}\rfloor^2+2\lfloor\sqrt{k}\rfloor,
        \end{cases}
    \]
    where the equality can be attained when $S=S_0$.
\end{lemma}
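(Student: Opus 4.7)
The plan is to reduce Lemma~\ref{2din} to Lemma~\ref{2d} by means of a morphological erosion, so as to avoid setting up a separate compression argument for the inner boundary. I would first introduce the \emph{interior} of $S$,
\begin{equation*}
S^{\circ} := \{v \in S : N(v) \subseteq S\} = S \setminus \partial_{P_{\N_0} \boxtimes P_{\N_0}}^{in}(S),
\end{equation*}
so that $|S^{\circ}| = k - |\partial_{P_{\N_0} \boxtimes P_{\N_0}}^{in}(S)|$. The first step, and the only conceptually substantive one, is the inclusion $\partial_{P_{\N_0} \boxtimes P_{\N_0}}(S^{\circ}) \subseteq \partial_{P_{\N_0} \boxtimes P_{\N_0}}^{in}(S)$: if $v \in \partial(S^{\circ})$ has a neighbor $u \in S^{\circ}$, then $v \in N(u) \subseteq S$ by the very definition of $S^{\circ}$, while $v \notin S^{\circ}$ then places $v$ in $S \setminus S^{\circ} = \partial^{in}(S)$. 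Taking cardinalities gives $|\partial(S^{\circ})| \le |\partial^{in}(S)|$.

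Next I would apply Lemma~\ref{2d} to the finite set $S^{\circ}$ (provided it is nonempty), obtaining $|\partial(S^{\circ})| \ge h(|S^{\circ}|)$, where $h$ denotes the (visibly non-decreasing) function appearing on the right-hand side of Lemma~\ref{2d}. Writing $b := |\partial^{in}(S)|$, the two inequalities combine to give the implicit bound $b \ge h(k - b)$. I would then translate this into the desired $b \ge g(k)$, where $g$ is the right-hand side of Lemma~\ref{2din}, by a short three-way case split aligned with the three ranges in the statement. For instance, when $k = m^2$, assuming $b \le 2m - 2$ forces $k - b \ge (m-1)^2 + 1$, whence $h(k - b) \ge 2m$, contradicting $b \le 2m - 2$. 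The two remaining ranges $k \in [m^2 + 1, m^2 + m]$ and $k \in [m^2 + m + 1, m^2 + 2m]$ are handled by the same mechanism: the forced lower bound on $k - b$ just clears the next threshold of $h$, producing a contradiction with the assumed upper bound on $b$.

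The degenerate case $S^{\circ} = \emptyset$ must be handled separately, but it yields $b = k$ directly, and the required $k \ge g(k)$ is an elementary check valid for all $k \ge 1$. Equality at $S = S_0$ would be confirmed by a direct computation: the interior of a box-order initial segment is itself (essentially) a box-order initial segment one shell lower, so the chain $|\partial^{in}(S_0)| \ge |\partial(S_0^{\circ})| \ge h(|S_0^{\circ}|)$ turns out to be tight.

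I do not anticipate any serious obstacle. The decisive move is recognizing the morphological duality $\partial(S^{\circ}) \subseteq \partial^{in}(S)$, which converts the inner-boundary question into the outer-boundary question already resolved by Lemma~\ref{2d}. The only mildly tedious piece is the three-way case analysis needed to pass from $b \ge h(k - b)$ to $b \ge g(k)$, but this is entirely routine once one notes that $h$ is non-decreasing and matches up the threshold values.
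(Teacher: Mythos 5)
Your proposal is correct, but it takes a genuinely different route from the paper. The paper does not reduce the inner-boundary problem to Lemma \ref{2d}: it proves a separate compression lemma for the interior (Lemma \ref{incompression}, showing $|Int(C_D(U))|\ge |Int(U)|$ via a column-counting argument) and then reruns the entire rearrangement machinery of the proof of Lemma \ref{2d}, a step it explicitly omits as a near-verbatim repetition. You instead use Lemma \ref{2d} as a black box: the erosion inclusion $\partial(S^{\circ})\subseteq \partial^{in}(S)$ is valid exactly as you argue (if $u\in S^{\circ}$ then $N(u)\subseteq S$, so any $v\in \partial(S^{\circ})$ lies in $S\setminus S^{\circ}=\partial^{in}(S)$), giving the implicit bound $b\ge h(k-b)$ with $h$ the bound of Lemma \ref{2d}; and your threshold analysis does convert this to $b\ge g(k)$ in all three ranges, e.g.\ for $t^2+1\le k\le t^2+t$, assuming $b\le 2t-1$ forces $k-b\ge (t-1)^2+1$, hence $h(k-b)\ge 2t$, a contradiction (the small cases $t=1$ are settled by $h(1)=3$, and $h$ is indeed nondecreasing, being constant from $t^2+2t$ to $(t+1)^2$). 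Your route buys a short, compression-free proof; the paper's route buys the structural fact that compressions never increase the inner boundary, a reusable statement parallel to Lemma \ref{compression}. One minor caveat: your parenthetical that the chain $|\partial^{in}(S_0)|\ge |\partial(S_0^{\circ})|\ge h(|S_0^{\circ}|)$ is tight is not literally exact in every subrange (for $k=t^2+1$ one gets $h(|S_0^{\circ}|)=2t-1<2t=|\partial^{in}(S_0)|$), but this is immaterial, since the equality claim only requires the direct count $|\partial^{in}(S_0)|=g(k)$, which is routine.
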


Although Lemma \ref{2d} is just a special case of Theorem \ref{thm3}, Theorem \ref{thm3} itself was only mentioned in \cite{VR} without a formal proof. So for completeness, we show proofs of both lemmas.

The remainder of this paper is organized as follows. In Section 2, we prove Lemma \ref{2d} and Lemma \ref{2din}. In Section 3, we first prove Theorem \ref{finite}, where the proof is split into five parts, and then prove Corollary \ref{cor}. In Section 4, we explain why the tensor products of paths are not studied in this paper, and suggest two problems.

\section{Proofs of Lemma \ref{2d} and Lemma \ref{2din}}
We can embed $P_{\N_0} \boxtimes P_{\N_0}$ and $P_x\boxtimes P_y$ onto the $x$-axis, the $y$-axis, and the first quadrant in a Cartesian coordinate system. From now on, in $P_{\N_0} \boxtimes P_{\N_0}$ or $P_x\boxtimes P_y$, the column with $(i,\ 0)$ on it will be called the $i$-th column, and the row with $(0,\ j)$ on it will be called the $j$-th row, so we will have the $0$-th column and the $0$-th row.

In a graph $G=(V,\ E)$, the \emph{closed neighborhood} of $U\subseteq V$, denoted by $N[U]$, is defined by
\begin{align*}
    N[U]:=U\cup \partial_G(U).
\end{align*}
So if we want to check the size of the vertex boundary of a $k$-element set $S$, it is equivalent to check the size of $N[S]$, which is $|S\cup \partial_G(S)|$, because $|\partial_G(S)|=|S\cup \partial_G(S)|-k$.

Let $U$ be a finite set of vertices in $P_{\N_0} \boxtimes P_{\N_0}$. Assume that $U$ has $y_i$ vertices on the $i$-th column for $0\le i\le I$, no vertices on the $i'$-th column for any $i'\ge I+1$, $x_j$ vertices on the $j$-th row for $0\le j\le J$, and no vertices on the $j'$-th row for any $j'\ge J+1$. We have $\sum_{i=0}^I y_i=\sum_{j=0}^J x_j=|U|$. Let the \emph{down-compression} $C_D(U)$ of $U$ be 
\begin{align*}
    \{(0,\ 0),\ (0,\ 1),\ ...,\ (0,\ y_0),\ (1,\ 0),\ (1,\ 1),\ ...,\ (1,\ y_1),\ ...,\ (I,\ 0),\ (I,\ 1),\ ...,\ (I,\ y_I)\}.
\end{align*}
Let the \emph{left-compression} $C_L(U)$ of $U$ be 
\begin{align*}
    \{(0,\ 0),\ (1,\ 0),\ ...,\ (x_0,\ 0),\ (0,\ 1),\ (1,\ 1),\ ...,\ (x_1,\ 1),\ ...,\ (0,\ J),\ (1,\ J),\ ...,\ (x_J,\ J)\}.
\end{align*}
Intuitively, the down-compression pushes all vertices downward, and the left-compression pushes all vertices to the left.

We show that the compressions do not increase the size of the closed neighborhood of a set. The proof of this lemma is similar to how Iamphongsai and Kittipassorn proved Lemma 4 in \cite{IK}.

\begin{lemma}\label{compression}
    Let $U$ be a finite set of vertices in $P_{\N_0} \boxtimes P_{\N_0}$. Then $|N[C_D(U)]|\le |N[U]|$ and $|N[C_L(U)]|\le |N[U]|$.
\end{lemma}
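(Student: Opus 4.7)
The plan is to treat the down-compression case; the left-compression bound follows by swapping the two coordinates and repeating the argument. The strategy is to decompose $N[U]$ column by column. For a finite vertex set $W \subseteq V(P_{\N_0} \boxtimes P_{\N_0})$ and an integer $i \ge 0$, let $Y_i^W$ denote the set of row-indices $j'$ such that $(i', j') \in W$ for some $i' \in \{i-1, i, i+1\} \cap \N_0$. Since $(i, j)$ and $(i', j')$ are either adjacent or equal in the strong product exactly when $|i - i'| \le 1$ and $|j - j'| \le 1$, the intersection of $N[W]$ with the $i$-th column equals $\{i\} \times N_{P_{\N_0}}[Y_i^W]$, so
\[
|N[W]| = \sum_{i \ge 0} |N_{P_{\N_0}}[Y_i^W]|.
\]
The proof reduces to a column-by-column comparison of this sum for $W = U$ and $W = C_D(U)$.

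Extend the paper's column-counts by setting $y_j := 0$ for $j \notin \{0, 1, \ldots, I\}$, and define $M_i := \max(y_{i-1}, y_i, y_{i+1})$. The construction of $C_D(U)$ forces $Y_i^{C_D(U)} = \{0, 1, \ldots, M_i - 1\}$, and consequently $N_{P_{\N_0}}[Y_i^{C_D(U)}] = \{0, 1, \ldots, M_i\}$ whenever $M_i \ge 1$, contributing exactly $M_i + 1$ to the sum. For $U$ itself, $Y_i^U$ contains the row-projection of $U$ within each of the columns $i-1, i, i+1$, and those projections have sizes $y_{i-1}, y_i, y_{i+1}$, so $|Y_i^U| \ge M_i$. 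Furthermore, for any non-empty finite $A \subseteq \N_0$ one has $|N_{P_{\N_0}}[A]| \ge |A| + 1$, because $\max(A) + 1$ belongs to $N_{P_{\N_0}}[A] \setminus A$. Combining these two facts gives $|N_{P_{\N_0}}[Y_i^U]| \ge M_i + 1$ on every column with $M_i \ge 1$, while both quantities vanish when $M_i = 0$. Summing over $i$ yields $|N[U]| \ge |N[C_D(U)]|$.

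There is no serious obstacle beyond careful bookkeeping: the set of columns supporting $N[U]$ coincides with that supporting $N[C_D(U)]$, since both depend only on the shared column-counts $y_i$; and the boundary index $i = 0$, at which no $(i-1)$-th column exists, is absorbed by the convention $y_{-1} = 0$. The bound $|N[C_L(U)]| \le |N[U]|$ is obtained by applying the identical argument after exchanging the roles of rows and columns, using the row-indexed analogue of $Y_i^W$.
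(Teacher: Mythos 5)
Your proof is correct and takes essentially the same approach as the paper's: both decompose $N[\cdot]$ column by column, observe that the $i$-th column of $N[C_D(U)]$ is an initial segment of size $\max\{y_{i-1},\ y_i,\ y_{i+1}\}+1$, and bound the corresponding column of $N[U]$ below by that same quantity using the fact that a nonempty subset of $\N_0$ gains at least one element under the closed path-neighborhood. Your explicit treatment of the columns with $M_i=0$ is, if anything, slightly more careful than the paper's.
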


\begin{proof}
    We only prove $|N[C_D(U)]|\le |N[U]|$, because it is exactly the same for $|N[C_L(U)]|$. For a set $A\subseteq \N_0$, we define $A^+:=\{a+1:\ a\in A\}$, and define $A^-:=\{a-1:\ a\in A\}\cap \N_0$. For a finite set $B\subseteq V(P_{\N_0} \boxtimes P_{\N_0})$, we define $B_i:=\{\ell:\ (i,\ \ell)\in S\}$ for any $i\in \N_0$. Assume that $I$ is the largest number $i$ such that $U$ has a vertex on the $i$-th column. 
    
    By the adjacency relation in $P_{\N_0} \boxtimes P_{\N_0}$, we can see that, for any $0\le i\le I$,
    \begin{align*}
        N[U]_i=(U_i\cup U_i^+\cup U_i^-)\cup (U_{i-1}\cup U_{i-1}^+\cup U_{i-1}^-)\cup (U_{i+1}\cup U_{i+1}^+\cup U_{i+1}^-),
    \end{align*}
    where $U_{-1}$, $U_{-1}^+$, and $U_{-1}^-$ are taken to be $\emptyset$.

    Similarly, if we denote $C_D(U)$ by $W$, then
    \begin{align*}
        N[C_D(U)]_i=N[W]_i=(W_i\cup W_i^+\cup W_i^-)\cup (W_{i-1}\cup W_{i-1}^+\cup W_{i-1}^-)\cup (W_{i+1}\cup W_{i+1}^+\cup W_{i+1}^-),
    \end{align*}
    where $W_{-1}$, $W_{-1}^+$, and $W_{-1}^-$ are taken to be $\emptyset$.

    As $W$ is the down-compression of $U$, for each $0\le i\le I$, we have $W_i\cup W_i^+\cup W_i^-=W_i\cup W_i^+=\{0,\ 1,\ 2,\ ...,\ c_i\}$, where $c_i=|U_i|$. So we know 
    \begin{align*}
        |N[W]_i|&=\max\{|W_i\cup W_i^+\cup W_i^-|,\ |W_{i-1}\cup W_{i-1}^+\cup W_{i-1}^-|,\ |W_{i+1}\cup W_{i+1}^+\cup W_{i+1}^-|\} \\
        &=\max\{|U_i|+1,\ |U_{i-1}|+1,\ |U_{i+1}|+1\}.
    \end{align*}
    For each $0\le i\le I$, we have $|U_i\cup U_i^+\cup U_i^-|\ge |U_i\cup U_i^+|\ge |U_i|+1$. So $|N[W]_i|\le |N[U]_i|$, which further implies
    \begin{align*}
        |N[C_D(U)]|=|N[W]|=\sum_{i=0}^I |N[W]_i|\le \sum_{i=0}^I |N[U]_i|=|N[U]|.
    \end{align*}
\end{proof}

It is easy to see that, if $C_D(U)\neq U$, then
\begin{align*}
    \sum_{(x,\ y)\in C_D(U)}(x+y)<\sum_{(x,\ y)\in U}(x+y),
\end{align*}
and if $C_L(U)\neq U$, then
\begin{align*}
    \sum_{(x,\ y)\in C_L(U)}(x+y)<\sum_{(x,\ y)\in U}(x+y).
\end{align*}
So if we start from a finite set of vertices, and alternatively compress it down and compress it to the left, then this iteration will stop after finitely many steps, as the sum of $(x+y)$ over all $(x,\ y)$ in the set must be positive. We say a finite vertex set $U$ is \emph{compressed} if $C_D(U)=C_L(U)=U$. In fact, for any $t\in \N$, the set of the first $t$ vertices in the box order is compressed.

\begin{proof}[Proof of Lemma \ref{2d}]
    Let $\mathcal{C}$ be the set of all compressed $k$-element sets. Lemma \ref{compression} shows that the compressions do not increase the size of the closed neighborhood of a set. So, to prove Lemma \ref{2d}, we only need to show that $\min_{S\in \mathcal{C}}|N[S]|=|N[S_0]|$, that is, $\min_{S\in \mathcal{C}}|N[S]|$ can be attained by the set of the first $k$ vertices in the box order.
    
    There is a $t\in \N$ such that $t^2\le k\le (t+1)^2-1$. We first show that $\min_{S\in \mathcal{C}}|N[S]|$ can be attained by a set containing every vertex in $\{(v_1,\ v_2):\ 0\le v_1,\ v_2\le t-1\}$.
    
    Let $S$ be a set in $\mathcal{C}$. If $\{(v_1,\ v_2):\ 0\le v_1,\ v_2\le t-1\}\subseteq S$, then we can skip this part, so let us assume $\{(v_1,\ v_2):\ 0\le v_1,\ v_2\le t-1\}\setminus S\neq \emptyset$. Because $|S|=k\ge t^2$ and it is a compressed set, we must have that $\{(x,\ 0):\ 0\le x\le t-1\}\subseteq S$ or $\{(0,\ y):\ 0\le y\le t-1\}\subseteq S$. By symmetry, we assume $\{(0,\ y):\ 0\le y\le t-1\}\subseteq S$. Then there is a $1\le j\le t-1$ such that $\{(s,\ y):\ 0\le y\le t-1\}\nsubseteq S$ for any $j\le s\le t-1$ but $\{(\ell,\ y):\ 0\le y\le t-1\}\subseteq S$ for any $0\le \ell \le j-1$. We have two cases.

    \textbf{Case 1.} $(j,\ 0)\in S$.

    We assume $(j,\ 0),\ (j,\ 1),\ (j,\ 2),\ ...,\ (j,\ r-1)\in S$, but $(j,\ r)\notin S$. As $|S|\ge t^2$ and $S$ is compressed, there is a vertex $(u_1,\ u_2)\in S$ outside $\{(v_1,\ v_2):\ 0\le v_1,\ v_2\le t-1\}$ with $(u_1+1,\ u_2)\notin S$ and $(u_1,\ u_2+1)\notin S$. Now we exclude this vertex $(u_1,\ u_2)$ from $S$, add $(j,\ r)$ to $S$, and get a new set $S_{new}$. Excluding $(u_1,\ u_2)$ decreases $|N[S]|$ by one or two. In fact, it is one if $(u_1-1,\ u_2)\in S$ and $(u_1,\ u_2-1)\in S$, and it is two if either $(u_1-1,\ u_2)\notin S$ or $(u_1,\ u_2-1)\notin S$, which means $u_1=0$ or $u_2=0$. Adding $(j,\ r)$ increases $|N[S]|$ by exactly one. So we have $|N[S_{new}]|\le |N[S]|$. Also, it is easy to see that $S_{new}$ is still a compressed set. So we can update $S$ to $S_{new}$ and repeat this process until $\{(j,\ y):\ 0\le y\le t-1\}\subseteq S$, and then we can update $j$ to $j+1$ and again we have the same two cases.

    \textbf{Case 2.} $(j,\ 0)\notin S$.

    In this case, our current $S\cap \{(v_1,\ v_2):\ 0\le v_1,\ v_2\le t-1\}$ looks like a rectangle with base $j$ and height $t$. We will rotate and reflect the shape of $S\setminus \{(v_1,\ v_2):\ 0\le v_1,\ v_2\le t-1\}$, which is now on the top of the rectangle, and move it to the right of the rectangle.

    For any vertex $(u_1,\ u_2)\in S\setminus \{(v_1,\ v_2):\ 0\le v_1,\ v_2\le t-1\}$, we exclude this vertex from $S$, and add $(u_2-(t-1)+(j-1),\ u_1)=(u_2-t+j,\ u_1)$ to $S$. After moving all vertices in $S\setminus \{(v_1,\ v_2):\ 0\le v_1,\ v_2\le t-1\}$, we get a new set $S_{new}$. It is easy to see that $|N[S]|=|N[S_{new}]|$. Here we make two observations. First, $S\setminus \{(v_1,\ v_2):\ 0\le v_1,\ v_2\le t-1\}$ is not very wide, because $\max \{u_1:\ (u_1,\ u_2)\in S\setminus \{(v_1,\ v_2):\ 0\le v_1,\ v_2\le t-1\}\}=\max \{u_1:\ (u_1,\ t)\in S\}$ is at most $j-1<t-1$. This implies that our $S_{new}$ is still a compressed set. Second, $S\setminus \{(v_1,\ v_2):\ 0\le v_1,\ v_2\le t-1\}$ is pretty tall, because 
    \begin{align*}
        &[\max \{u_2:\ (0,\ u_2)\in S\}-(t-1)]\times j+t\times j \\
        \ge &|S\setminus \{(v_1,\ v_2):\ 0\le v_1,\ v_2\le t-1\}|+|S\cap \{(v_1,\ v_2):\ 0\le v_1,\ v_2\le t-1\}| \\
        = &|S|\\
        \ge &t^2,
    \end{align*}
    which means $\max \{u_2:\ (0,\ u_2)\in S\}\ge \frac{t^2}{j}-1$. From $S$ to $S_{new}$, the vertex $\Bigl(0,\ \Bigl\lfloor \frac{t^2}{j}-1 \Bigr\rfloor\Bigr)$ is excluded, the vertex $\Bigl(\Bigl\lfloor \frac{t^2}{j}-1 \Bigr\rfloor-t+j,\ 0 \Bigr)$ is added, and we have that $\Bigl\lfloor \frac{t^2}{j}-1 \Bigr\rfloor-t+j=\Bigl\lfloor \frac{t^2}{j}+j-t-1 \Bigr\rfloor \ge \lfloor t-1\rfloor=t-1$, so actually for any $0\le x\le t-1$, we have $(x,\ 0)\in S_{new}$.

    Now we update $S$ to $S_{new}$, and we have $(j,\ 0)\in S$, so we will go back to Case 1. In fact, because now $(x,\ 0)\in S$ for any $0\le x\le t-1$, Case 2 will never be encountered again.

    In each case, we exclude at least one vertex outside $\{(v_1,\ v_2):\ 0\le v_1,\ v_2\le t-1\}$ from $S$ and add at least one vertex in $\{(v_1,\ v_2):\ 0\le v_1,\ v_2\le t-1\}$ into $S$ while making sure that the new set $S_{new}$ we get is still a compressed set and $|N[S_{new}]|\le |N[S]|$, which means $|\partial_{P_{\N_0} \boxtimes P_{\N_0}}(S_{new})|\le |\partial_{P_{\N_0} \boxtimes P_{\N_0}}(S)|$. This iteration will stop after finitely many steps, and at that point, the new set we get will contain every vertex in $\{(v_1,\ v_2):\ 0\le v_1,\ v_2\le t-1\}$, and the size of the vertex boundary of the new set will be at most the size of the vertex boundary of the original set.

    Now we have shown that $\min_{S\in \mathcal{C}}|N[S]|$ can be attained by a set containing every vertex in $\{(v_1,\ v_2):\ 0\le v_1,\ v_2\le t-1\}$. If $k=t^2$, then the vertices in $\{(v_1,\ v_2):\ 0\le v_1,\ v_2\le t-1\}$ are exactly the first $t^2$ vertices in the box order, and we have the desired conclusion. Assume $t^2+1\le k\le (t+1)^2-1$. Let $\mathcal{C}'\subseteq \mathcal{C}$ be the set of those compressed $k$-element sets containing every vertex in $\{(v_1,\ v_2):\ 0\le v_1,\ v_2\le t-1\}$. Let $S$ be a set in $\mathcal{C}'$.
    
    As $S$ is compressed, we have $(t,\ 0)\in S$ or $(0,\ t)\in S$. Here we only show how to address the case where $(0,\ t)\in S$. Because if $(t,\ 0)\in S$ and $(0,\ t)\notin S$, then we can do the same argument symmetrically, and at the end, we exclude every $(x,\ y)\in S$ from $S$ and add $(y,\ x)$ to $S$, and we will get the same conclusion.
    
    Now we have two cases. This part of argument is similar to what we have already done.

    \textbf{Case I.} There is some $1\le j\le t-1$ such that $(s,\ t)\notin S$ for any $j\le s\le t-1$ and $(\ell,\ t)\in S$ for any $0\le \ell\le j-1$.

    If now we have $S=\{(v_1,\ v_2):\ 0\le v_1,\ v_2\le t-1\}\cup \{(\ell,\ t):\ 0\le \ell\le j-1\}$, then the vertices in $S$ are the first $k=t^2+j$ vertices in the box order. Otherwise, there is a vertex $(u_1,\ u_2)\in S$ outside $\{(v_1,\ v_2):\ 0\le v_1,\ v_2\le t-1\}\cup \{(\ell,\ t):\ 0\le \ell\le j-1\}$ with $(u_1+1,\ u_2)\notin S$ and $(u_1,\ u_2+1)\notin S$. Now we exclude this vertex $(u_1,\ u_2)$ from $S$, add $(j,\ t)$ to $S$, and get a new set $S_{new}$. Excluding $(u_1,\ u_2)$ decreases $|N[S]|$ by one or two. Adding $(j,\ t)$ increases $|N[S]|$ by exactly one. So we have $|N[S_{new}]|\le |N[S]|$. Also, it is easy to see that $S_{new}$ is still a compressed set. So we can update $S$ to $S_{new}$, update $j$ to $j+1$, and repeat this process until $S=\{(v_1,\ v_2):\ 0\le v_1,\ v_2\le t-1\}\cup \{(\ell,\ t):\ 0\le \ell\le j-1\}$ or $\{(\ell,\ t):\ 0\le \ell\le t-1\}\subseteq S$. In the former case, we get the desired conclusion; in the later case, we get into Case II.

    \textbf{Case II.} $(\ell,\ t)\in S$ for any $0\le \ell \le t-1$.

    If now we have $S=\{(v_1,\ v_2):\ 0\le v_1,\ v_2\le t-1\}\cup \{(\ell,\ t):\ 0\le \ell\le t-1\}$, then the vertices in $S$ are the first $k=t^2+t$ vertices in the box order. Otherwise, we have two subcases.

    \textbf{Subcase II.i.} $(t,\ 0)\in S$.

    In this subcase, we just need to mimic what we have done in Case I to exclude vertices outside $\{(w_1,\ w_2):\ 0\le w_1,\ w_2\le t\}$ from $S$ and add vertices on the $t$-th column to $S$. This process will stop after at most $t-1$ steps, because $|S|=k\le (t+1)^2-1$. When we stop, $S$ will have the first $k$ elements in the box order.

    \textbf{Subcase II.ii.} $(t,\ 0)\notin S$.

    In this subcase, $(0,\ t+1)\in S$, as $S$ is a compressed set. Just like what we have done in Case 2, for any vertex $(u_1,\ u_2)\in S\setminus \{(v_1,\ v_2):\ 0\le v_1\le t-1,\ 0\le v_2\le t\}$, we exclude this vertex from $S$, and add $(u_2-t+(t-1),\ u_1)=(u_2-1,\ u_1)$ to $S$. In this process, $|N[S]|$ stays the same, and the new set we get is still a compressed set. Moreover, $(0,\ t+1)\in S$ is excluded and $(t,\ 0)$ is added, so we can go to Subcase II.i, and eventually we will get the first $k$ elements in the box order.

    As a summary, in this proof, we began with a set $S\in \mathcal{C}$, and showed that we could do some operations to make $S$ become $S_0$, the set of the first $k$ elements in the box order, and each of these operations did not increase $|N[S]|$. This argument works for any $S\in \mathcal{C}$. Thus, we have $\min_{S\in \mathcal{C}}|N[S]|=|N[S_0]|$. It is easy to check that 
    \[
        |N[S_0]|=\begin{cases}
        (t+1)^2 & if\ |S_0|=k=t^2, \\
        (t+1)^2+k-t^2+1 & if\ t^2+1\le |S_0|=k\le t^2+t, \\
        (t+1)^2+k-t^2+2 & if\ t^2+t+1\le |S_0|=k\le t^2+2t.
        \end{cases}
    \]
    Combining this result with the fact that $|\partial_{P_{\N_0} \boxtimes P_{\N_0}}(S_0)|=|N[S_0]|-k$, we have
    \[
        |\partial_{P_{\N_0} \boxtimes P_{\N_0}}(S)|\ge\begin{cases}
            2\sqrt{k}+1 & if\ \sqrt{|S|}=\sqrt{k}\in \N, \\
            2\lfloor\sqrt{k}\rfloor+2 & if \lfloor\sqrt{k}\rfloor^2+1\le k\le \lfloor\sqrt{k}\rfloor^2+\lfloor\sqrt{k}\rfloor, \\
            2\lfloor\sqrt{k}\rfloor+3 & if \lfloor\sqrt{k}\rfloor^2+\lfloor\sqrt{k}\rfloor+1\le k\le \lfloor\sqrt{k}\rfloor^2+2\lfloor\sqrt{k}\rfloor.
        \end{cases}
    \]
\end{proof}

Then, to prove Lemma \ref{2din}, first we show that the compressions do not increase the size of the inner vertex boundary of a set. We define the \emph{interior} of $S$, denoted by $Int(S)$, to be $S\setminus \partial_G^{in}(S)$. It is equivalent to show that the compressions do not decrease the size of the interior of a set. The method for proving Lemma \ref{incompression} is not the same as what we had in the proof of Lemma \ref{compression}.

\begin{lemma}\label{incompression}
    Let $U$ be a finite set of vertices in $P_{\N_0} \boxtimes P_{\N_0}$. Then $|Int(C_D(U))|\ge |Int(U)|$ and $|Int(C_L(U))|\ge |Int(U)|$.
\end{lemma}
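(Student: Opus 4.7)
The plan is to reduce everything to a column-by-column comparison; the case of the left-compression then follows from the symmetric argument after swapping coordinates. Write $V := C_D(U)$, and for each $i \in \N_0$ set $U_i := \{\ell \in \N_0 : (i, \ell) \in U\}$, $V_i := \{\ell \in \N_0 : (i, \ell) \in V\}$, and $c_i := |U_i| = |V_i|$. By the definition of the down-compression, $V_i = \{0, 1, \ldots, c_i - 1\}$ is an initial segment of $\N_0$. For any subset $A \subseteq \N_0$ I would introduce the operator
\begin{align*}
    \phi(A) := \{j \in \N_0 : \{j-1, j, j+1\} \cap \N_0 \subseteq A\}.
\end{align*}
A direct inspection of adjacencies in $P_{\N_0}\boxtimes P_{\N_0}$ yields, for every $i \ge 0$, the characterization
\begin{align*}
    Int(U) \cap (\{i\} \times \N_0) = \{i\} \times \bigl(\phi(U_{i-1}) \cap \phi(U_i) \cap \phi(U_{i+1})\bigr),
\end{align*}
with the convention $\phi(U_{-1}) := \N_0$ (equivalently $c_{-1} := +\infty$) so that the column $i = 0$ is handled uniformly.

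Two elementary observations then finish the job. First, for any finite nonempty $A \subseteq \N_0$ one has $|\phi(A)| \le \max(0, |A| - 1)$, because $\max A \notin \phi(A)$; moreover, equality holds precisely when $A$ is an initial segment, in which case $\phi(A)$ is itself an initial segment. Applied to each column of $V$, this gives $\phi(V_i) = \{0, 1, \ldots, c_i - 2\}$, so $\phi(V_{i-1}) \cap \phi(V_i) \cap \phi(V_{i+1})$ is an initial segment of size exactly $\max(0, \min(c_{i-1}, c_i, c_{i+1}) - 1)$. Combining:
\begin{align*}
    |Int(U) \cap (\{i\} \times \N_0)|
    &\le \min\bigl(|\phi(U_{i-1})|, |\phi(U_i)|, |\phi(U_{i+1})|\bigr) \\
    &\le \max\bigl(0, \min(c_{i-1}, c_i, c_{i+1}) - 1\bigr) \\
    &= |Int(V) \cap (\{i\} \times \N_0)|.
\end{align*}
Summing over $i$ gives $|Int(U)| \le |Int(C_D(U))|$, which is the desired inequality.

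For the left-compression $C_L$, the same reasoning applies verbatim after swapping coordinates via the graph automorphism $(x,y) \mapsto (y,x)$ of $P_{\N_0}\boxtimes P_{\N_0}$, now decomposing by rows rather than columns. The main care point, and the likely source of bookkeeping errors if one is sloppy, is the boundary behavior: the condition ``$j-1 \in A$'' in the definition of $\phi$ is vacuous when $j = 0$, and the factor $\phi(U_{-1})$ must be treated as $\N_0$ in the $i = 0$ column. Once those edge cases are pinned down, the proof reduces to the two short counting facts above, and no iterative ``push and replace'' argument (unlike the one used for \Cref{compression}) is required.
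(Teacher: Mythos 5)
Your proposal is correct and takes essentially the same approach as the paper: both proofs compare $Int(U)$ and $Int(C_D(U))$ column by column and establish the identical per-column bound $\max\bigl(0,\ \min(c_{i-1},\ c_i,\ c_{i+1})-1\bigr)$, handling the $i=0$ column by the same convention of an infinite (or sufficiently large) fictitious column $-1$. Your operator $\phi$ simply packages as a direct counting argument what the paper proves by contradiction --- namely that $k_i$ interior heights on column $i$ force at least $k_i+1$ vertices of $U$ on each of the columns $i-1$, $i$, and $i+1$ --- so the two proofs agree in substance.
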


\begin{proof}
    We only prove $|Int(C_D(U))|\ge |Int(U)|$, because it is the same for $|Int(C_L(U))|$. Assume that $U$ has $y_i$ vertices on the $i$-th column for any $i\in \N_0$, and assume that $I$ is the largest $i$ such that $U$ has a vertex on the $i$-th column. So for any $i\ge I+1$, we have $y_i=0$. Additionally, we define $y_{-1}=\max_{0\le i\le I} y_i+1$ (any larger number also works).

    As $C_D(U)$ is the down-compression of $U$, we can see that, for any $0\le i\le I$, the number of vertices in $Int(C_D(U))$ on the $i$-th column is 
    \begin{align*}
        \max \{0,\ \min \{y_{i-1}-1,\ y_i-1,\ y_{i+1}-1\}\}.
    \end{align*}

    Now we look at $Int(U)$. Assume that for some $0\le i\le I$, $Int(U)$ has $k_i$ vertices on the $i$-th column with $k_i>0$ and $k_i>\min \{y_{i-1}-1,\ y_i-1,\ y_{i+1}-1\}$. Assume that these $k_i$ vertices are $(i,\ s_1),\ (i,\ s_2),\ ...,\ (i,\ s_{k_i})$ with $s_1<s_2<...<s_{k_i}$. These vertices are in the interior of $U$, so by the adjacency relation in $P_{\N_0} \boxtimes P_{\N_0}$, on the $i$-th column, we must have $(i,\ s_1),\ (i,\ s_2),\ ...,\ (i,\ s_{k_i})\in U$ and $(i,\ s_{k_i}+1)\in U$; on the $(i-1)$-th column, we must have $(i-1,\ s_1),\ (i-1,\ s_2),\ ...,\ (i-1,\ s_{k_i})\in U$ and $(i-1,\ s_{k_i}+1)\in U$; on the $(i+1)$-th column, we must have $(i+1,\ s_1),\ (i+1,\ s_2),\ ...,\ (i+1,\ s_{k_i})\in U$ and $(i+1,\ s_{k_i}+1)\in U$. But then, on each of the $(i-1)$-th, $i$-th, and $(i+1)$-th columns, we have at least $k_i+1>\min\{y_{i-1},\ y_i,\ y_{i+1}\}$ vertices, which is not possible.

    So, for any $0\le i\le I$, $Int(U)$ has at most $\max \{0,\ \min \{y_{i-1}-1,\ y_i-1,\ y_{i+1}-1\}\}$ vertices on the $i$-th colomn. Thus, $|Int(C_D(U))|\ge |Int(U)|$.
\end{proof}

Then, to complete the proof of Lemma \ref{2din}, we just need to mimic the proof of Lemma \ref{2d}. This part is omitted here.

\section{Proof of Theorem \ref{finite}}

Now we take $U$ to be a set of vertices in $P_x\boxtimes P_y$, where $x,\ y\in \N$ and $2\le x\le y$. We can define the down-compression $C_D(U)$ of $U$ and the left-compression $C_L(U)$ of $U$ in the same way we defined them in $P_{\N_0}\boxtimes P_{\N_0}$. Also, we say $U$ is compressed if $C_D(U)=C_L(U)=U$.

Very similar to how we proved Lemma \ref{compression}, we can prove the following conclusion.

\begin{lemma}
    Let $U$ be a set of vertices in $P_x \boxtimes P_y$. Then $|N[C_D(U)]|\le |N[U]|$ and $|N[C_L(U)]|\le |N[U]|$.
\end{lemma}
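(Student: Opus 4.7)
The plan is to mimic the proof of Lemma \ref{compression} almost verbatim. By symmetry, I would only handle $|N[C_D(U)]|\le |N[U]|$. The essential modification is that the row-shift operations must respect the finite top row: for $A\subseteq \{0,1,\ldots,y-1\}$, redefine $A^+:=\{a+1:a\in A\}\cap \{0,\ldots,y-1\}$ and $A^-:=\{a-1:a\in A\}\cap \{0,\ldots,y-1\}$, and for $B\subseteq V(P_x\boxtimes P_y)$ write $B_i:=\{\ell:(i,\ell)\in B\}$ for $0\le i\le x-1$, with the convention $B_{-1}=B_x=\emptyset$ (the analog of the one-sided convention used in the infinite case).

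With these conventions, the column-wise decomposition
\[
N[U]_i=\bigcup_{j\in\{i-1,i,i+1\}}\bigl(U_j\cup U_j^+\cup U_j^-\bigr)
\]
still captures the strong-product adjacency in $P_x\boxtimes P_y$, and the same formula holds for $W:=C_D(U)$. Since $W$ is obtained from $U$ by pushing each column down to the contiguous block $\{0,1,\ldots,|U_i|-1\}$, the set $W_i\cup W_i^+\cup W_i^-$ equals $\{0,1,\ldots,\min\{|U_i|,y-1\}\}$, of size $\min\{|U_i|+1,y\}$ (with the $|U_i|=0$ case giving size $0$). Taking the maximum over $j\in\{i-1,i,i+1\}$ and using that these maxima for $W$ are themselves of the form $\min\{|U_j|+1,y\}$ yields an exact formula for $|N[W]_i|$.

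The one place the finite-range cap genuinely matters is in verifying the pointwise inequality $|U_i\cup U_i^+\cup U_i^-|\ge \min\{|U_i|+1,y\}$. If $|U_i|<y$, then $\max(U_i)+1\le y-1$ still lies in $\{0,\ldots,y-1\}$ and belongs to $U_i^+\setminus U_i$, giving size $\ge|U_i|+1$; if $|U_i|=y$, then $U_i=\{0,\ldots,y-1\}$ already has size $y$, and $U_i\cup U_i^+\cup U_i^-=U_i$. Applying this to each of the three columns $i-1,i,i+1$ gives $|N[W]_i|\le |N[U]_i|$, and summing over $0\le i\le x-1$ yields $|N[C_D(U)]|\le|N[U]|$. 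I do not expect a real obstacle here — the only subtlety is the bookkeeping at the top row, where $A^+$ can lose an element, and the argument above shows this loss happens exactly when the column is already saturated, so both sides cap at $y$ simultaneously. The proof of $|N[C_L(U)]|\le|N[U]|$ is identical with the roles of rows and columns (and of $x$ and $y$) swapped.
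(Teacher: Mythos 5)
Your proposal follows exactly the route the paper intends: the paper gives no separate proof of this lemma, remarking only that it is ``very similar'' to the proof of Lemma \ref{compression}, and your adaptation --- clipping $A^+$ and $A^-$ to $\{0,\ldots,y-1\}$, using the conventions $B_{-1}=B_x=\emptyset$, and summing the pointwise column inequality over all $0\le i\le x-1$ --- is the right way to carry that proof over (indeed, summing over all columns of the finite grid is tidier than the paper's own summation, which nominally stops at the last occupied column).

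One local justification is wrong, though the statement it supports is true. You claim that if $|U_i|<y$ then $\max(U_i)+1\le y-1$; this fails when $\max(U_i)=y-1$ but $U_i$ is not all of $\{0,\ldots,y-1\}$ (e.g.\ $U_i=\{y-1\}$ with $y\ge 2$), in which case $\max(U_i)+1=y$ is clipped away and your exhibited witness for $|U_i\cup U_i^+\cup U_i^-|\ge |U_i|+1$ disappears. The inequality itself survives: if $U_i$ is a nonempty proper subset of $\{0,\ldots,y-1\}$, then by connectivity of $P_y$ there is some $m\notin U_i$ adjacent to an element of $U_i$, i.e.\ $m\in (U_i^+\cup U_i^-)\setminus U_i$; concretely, when $\max(U_i)=y-1$, take $m$ to be one less than the bottom of the top contiguous block of $U_i$, so that $m\in U_i^-\setminus U_i$. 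With that one-line repair (and your correct observation that the saturated case $|U_i|=y$ caps both sides at $y$ simultaneously), the pointwise bound $|N[W]_i|\le |N[U]_i|$ holds in all cases and the rest of your argument goes through.
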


So, to determine when the size of the vertex boundary of a $k$-element set in $V(P_x \boxtimes P_y)$ is minimized, it suffices to consider those compressed sets.

Let $S$ be a compressed $k$-element set in $V(P_x \boxtimes P_y)$. We split the proof of Theorem \ref{finite} into five parts: In each of the first four parts, we address a specific case. In the fifth part, we determine when $\min|\partial_{P_x \boxtimes P_y}(S)|$ is attained.

\begin{proof}[Proof of Theorem \ref{finite}]
~\\
\textbf{Part 1.} We address the first case: Neither $(x-1,\ 0)$ nor $(0,\ y-1)$ is in $S$. This case is only possible when $k\le (x-1)(y-1)$.

By Lemma \ref{2d}, we can see that if $(x-1,\ 0)\notin S$ and $(0,\ y-1)\notin S$, then $\partial_{P_x \boxtimes P_y}(S)\subseteq P_x \boxtimes P_y$ and $|\partial_{P_x \boxtimes P_y}(S)|=|\partial_{P_{\N_0} \boxtimes P_{\N_0}}(S)|\ge |\partial_{P_{\N_0} \boxtimes P_{\N_0}}(S_0)|$, where $S_0$ is the set of the first $k$ vertices in the box order. Precisely, we have that 
    \[
        |\partial_{P_x \boxtimes P_y}(S)|\ge\begin{cases}
            2\sqrt{k}+1 & if\ \sqrt{|S|}=\sqrt{k}\in \N, \\
            2\lfloor\sqrt{k}\rfloor+2 & if \lfloor\sqrt{k}\rfloor^2+1\le k\le \lfloor\sqrt{k}\rfloor^2+\lfloor\sqrt{k}\rfloor, \\
            2\lfloor\sqrt{k}\rfloor+3 & if \lfloor\sqrt{k}\rfloor^2+\lfloor\sqrt{k}\rfloor+1\le k\le \lfloor\sqrt{k}\rfloor^2+2\lfloor\sqrt{k}\rfloor.
        \end{cases}
    \]
    
Futhermore, if $k\le (x-1)^2$, then the equality can be attained, because $S_0$ is a subset of $\{(i,\ j):\ 0\le i,\ j\le x-2\}$, and the vertex boundary of $S_0$ is a subset of $\{(i,\ j):\ 0\le i,\ j\le x-1\}\subseteq V(P_x \boxtimes P_y)$. The case $k=4$ in Figure \ref{pic} is an example.

If $(x-1)^2+1\le k\le (x-1)(y-1)$, then it is possible that the equality cannot be attained. But in this case we have $|\partial_{P_x \boxtimes P_y}(S)|\ge 2x$, and we will show that for $k$ in this range, there is a better lower bound that can be attained.

\textbf{Part 2.} We address the second case: $(x-1,\ 0)\in S$ but $(0,\ y-1)\notin S$. This case is only possible when $x\le k\le x(y-1)$.

The \emph{co-lexicographic order} on the vertices in $P_x \boxtimes P_y$ is defined by setting $(v_1,\ v_2)<(v_1',\ v_2')$ if either
\begin{center}
    $v_2<v_2'$
\end{center}
or
\begin{center}
    $v_2=v_2'$ and $v_1<v_1'$.
\end{center}

We prove: If $(x-1,\ 0)\in S$ but $(0,\ y-1)\notin S$, then $|\partial_{P_x \boxtimes P_y}(S)|\ge |\partial_{P_x \boxtimes P_y}(S_1)|$, where $S_1$ is the set of the first $k$ vertices in the co-lexicographic order. Precisely, we have that 
    \[
        |\partial_{P_x \boxtimes P_y}(S)|\ge\begin{cases}
            x & if\ x\mid k, \\
            x+1 & if\ x\nmid k,
        \end{cases}
    \]
    where the equality can be attained when $S=S_1$. The case $k=13$ in Figure \ref{pic} is an example.

\begin{proof}
    Assume $S\neq S_1$. Then there is a vertex $(v_1,\ v_2)$ such that every vertex before $(v_1,\ v_2)$ in the co-lexicographic order is in $S$, but $(v_1,\ v_2)$ itself is not in $S$. As $S$ is compressed, any vertex $(x,\ y)$ with $x\ge v_1$ and $y\ge v_2$ cannot be in $S$. Note that $v_1\neq 0$, because otherwise $S$ consists of all the vertices below the $v_2$-th row and no vertices on the $v_2$-th row or above the $v_2$-th row, so $S$ has the first $|S|$ vertices in the co-lexicographic order, a contradiction. We also have $v_2\neq 0$, because every vertex on the $0$-th row must be in $S$.

    Now, as $S\neq S_1$, there is a set of vertices $A\subseteq S$ with every vertex in $A$ appearing after $(v_1,\ v_2)$ in the co-lexicographic order.

    \textbf{Claim.} For any $(a_1,\ a_2)\in A$, we have that $a_1\neq x-1$ and $a_2\neq y-1$.

    \begin{proof}[Proof of Claim.]
        If $a_2=y-1$, then as $S$ is compressed, we have $(0,\ y-1)\in S$, a contradiction.

        Assume $a_1=x-1$. If $a_2<v_2$, then $(a_1,\ a_2)$ appears before $(v_1,\ v_2)$ in the co-lexicographic order, a contradiction. If $a_2\ge v_2$, then we have $a_2\ge v_2$ and $a_1=x-1\ge v_1$, which contradicts the assumption that $S$ is compressed.
    \end{proof}

    Let $(b_1,\ b_2)$ be a vertex in $A$ such that $b_1+b_2=\max_{(a_1,\ a_2)\in A}\{a_1+a_2\}$. Now we exclude $(b_1,\ b_2)$ from $S$, add $(v_1,\ v_2)$ to $S$, and get a new set $S_{new}$. Excluding $(b_1,\ b_2)$ decreases $|N[S]|$ by one or two (two if $b_1=0$; one otherwise). Adding $(v_1,\ v_2)$ increases $|N[S]|$ by zero or one (zero if $v_1=x-1$; one otherwise), because $v_1\neq 0$ and $v_2\neq 0$. So in this process, $|N[S]|$ does not increase, which means $|\partial_{P_x \boxtimes P_y}(S)|$ does not increase. We can update $S$ to $S_{new}$ and repeat this process until $S=S_1$. So $|\partial_{P_x \boxtimes P_y}(S)|\ge |\partial_{P_x \boxtimes P_y}(S_1)|$.

    It is easy to check that
    \[
        |\partial_{P_x \boxtimes P_y}(S_1)|=\begin{cases}
            x & if\ x\mid k, \\
            x+1 & if\ x\nmid k.
        \end{cases}
    \]
\end{proof}

\textbf{Part 3.} The third case is that $(0,\ y-1)\in S$ but $(x-1,\ 0)\notin S$, which is only possible when $y\le k\le y(x-1)$. Similarly, we can prove that in this case we have
    \[
        |\partial_{P_x \boxtimes P_y}(S)|\ge\begin{cases}
            y & if\ y\mid k, \\
            y+1 & if\ y\nmid k.
        \end{cases}
    \]

\textbf{Part 4.} Let us address the fourth and final possible case: Both $(x-1,\ 0)$ and $(0,\ y-1)$ are in $S$. This case is only possible when $k\ge x+y-1$.

Let $K=xy-k$. We prove: If both $(x-1,\ 0)$ and $(0,\ y-1)$ are in $S$, then
    \[
        |\partial_{P_x \boxtimes P_y}(S)|\ge\begin{cases}
            2\sqrt{K}-1 & if\ \sqrt{K}\in \N, \\
            2\lfloor\sqrt{K}\rfloor & if \lfloor\sqrt{K}\rfloor^2+1\le K\le \lfloor\sqrt{K}\rfloor^2+\lfloor\sqrt{K}\rfloor, \\
            2\lfloor\sqrt{K}\rfloor+1 & if \lfloor\sqrt{K}\rfloor^2+\lfloor\sqrt{K}\rfloor+1\le K\le \lfloor\sqrt{K}\rfloor^2+2\lfloor\sqrt{K}\rfloor.
        \end{cases}
    \]

\begin{proof}
    We observe that
    \begin{align*}
        \partial_{P_x \boxtimes P_y}(S)=\partial_{P_x \boxtimes P_y}^{in}(\overline{S}),
    \end{align*}
    where $\overline{S}:=V(P_x \boxtimes P_y)\setminus S$.
    
    We have assumed that $S$ is a compressed set in $P_x \boxtimes P_y$, so $\overline{S}$ is also a "compressed" set: It is not down-left-compressed, but up-right-compressed. Similar to what we did in Part 1, we can directly apply Lemma \ref{2din} here to get the desired lower bound.
\end{proof}

Regarding whether the equality can be attained, we have two cases.

\textbf{Case 1.} $xy-k\le (x-1)^2$, which means $k\ge xy-(x-1)^2$. In this case, the equality can be attained. Indeed, it is attained when we let $S_0'$ be the set of the first $xy-k$ elements in the box order, where we have $S_0'\subseteq \{(i,\ j):\ 0\le i,\ j\le x-2\}$, and then take $\overline{S}=\{(x-i,\ y-j):\ (i,\ j)\in S_0'\}$. The case $k=23$ in Figure \ref{pic} is an example.

\textbf{Case 2.} $xy-k\ge (x-1)^2+1$, which means $x+y-1\le k\le xy-(x-1)^2-1$. In this case, we must have $y\ge x+1$, because if $y=x$, then $x+y-1=2x-1>x^2-(x-1)^2-1=xy-(x-1)^2-1$, a contradiction.

There are two subcases.

\textbf{Subcase 2.1.} $(x-1)^2+1\le xy-k\le (x-1)^2+(x-1)$, which means $xy-(x-1)^2-(x-1)\le k\le xy-(x-1)^2-1$. In this subcase, the equality can be attained. It is attained when we let $S_0'$ be the set of the first $xy-k$ elements in the box order, where we have $S_0'\subseteq \{(i,\ j):\ 0\le i\le x-2,\ 0\le j\le x-1\}$, and then take $\overline{S}=\{(x-i,\ y-j):\ (i,\ j)\in S_0'\}$. Because $y\ge x+1$, we know that the vertices on the $0$-th row or the $0$-th column are in $S=V(P_x \boxtimes P_y)\setminus \overline{S}$, and in particular, $(x-1,\ 0),\ (0,\ y-1)\in S$.

\textbf{Subcase 2.2.} $xy-k\ge (x-1)^2+(x-1)+1$, which means $x+y-1\le k\le xy-(x-1)^2-(x-1)-1$. In this subcase, it is possible that the equality cannot be attained. However, in this subcase we have $|\partial_{P_x \boxtimes P_y}(S)|\ge 2x-1$, because if $\lfloor\sqrt{K}\rfloor=\lfloor\sqrt{xy-k}\rfloor=x-1$, then $\lfloor\sqrt{K}\rfloor^2+\lfloor\sqrt{K}\rfloor+1=(x-1)^2+(x-1)+1\le xy-k=K$, so $|\partial_{P_x \boxtimes P_y}(S)|\ge 2(x-1)+1=2x-1$; if $\lfloor\sqrt{K}\rfloor\ge x$, then we always have $|\partial_{P_x \boxtimes P_y}(S)|\ge 2x-1$. We will show that for $k$ in this range, there is a lower bound better than $2x-1$ that can be attained.

\textbf{Part 5.} Given $x,\ y,\ k\in \N$ with $2\le x\le y$ and $1\le k\le xy$, what is $\min_{|S|=k}|\partial_{P_x \boxtimes P_y}(S)|$?

First, we make the following two observations, which tell us that the conclusion in Part 3 can be ignored.
\begin{itemize}
    \item The bound in Part 2 is always better than the bound in Part 3.
    \item in Part 2, we need $x\le k\le x(y-1)$; in Part 3, we need $y\le k\le y(x-1)$; we always have $[y,\ y(x-1)]\subseteq [x,\ x(y-1)]$.
\end{itemize}

Then, in Part 1, we have that if $(x-1)^2+1\le k\le (x-1)(y-1)$, then $|\partial_{P_x \boxtimes P_y}(S)|\ge 2x$, but the lower bound in Part 2, which can be attained, is always better than $2x$. This is because $x\le 2x$ and $x+1\le 2x$ for any $x\in \N$. We also have that $[(x-1)^2+1,\ (x-1)(y-1)]\subseteq [x,\ x(y-1)]$. So the only information we need to take from Part 1 is the lower bound for $1\le k\le (x-1)^2$.

In Part 4, we have that if $y\ge x+1$ and $x+y-1\le k\le xy-(x-1)^2-(x-1)-1$, then $|\partial_{P_x \boxtimes P_y}(S)|\ge 2x-1$, but the lower bound in Part 2, which can be attained, is always better than $2x-1$. This is because $x\le 2x-1$ and $x+1\le 2x-1$ for any $x\ge 2$. We also have that $[x+y-1,\ xy-(x-1)^2-(x-1)-1]\subseteq [x,\ x(y-1)]$. So the only information we need to take from Part 4 is the lower bound for $xy-(x-1)^2-(x-1)\le k\le xy$, if $y\ge x+1$; or the lower bound for $xy-(x-1)^2\le k\le xy$, if $y=x$.

Finally, let
\begin{align*}
    I_1&=[1,\ (x-1)^2], \\
    I_2&=[x,\ x(y-1)],
\end{align*}
\[
    I_3=\begin{cases}
        [xy-(x-1)^2-(x-1),\ xy] & if\ y\ge x+1, \\
        [xy-(x-1)^2,\ xy] & if\ y=x,
    \end{cases}
\]
and let the lower bound in Part 1 be $\alpha_1$, the lower bound in Part 2 be $\alpha_2$, and the lower bound in Part 4 be $\alpha_3$. Each $\alpha_t$ can be attained in $I_t$. Then we have
\begin{align*}
    \min_{|S|=k}|\partial_{P_x \boxtimes P_y}(S)|=\min_{i\in \mathcal{A}_k}\{\alpha_i\},
\end{align*}
where $\mathcal{A}_k$ is the subset of $\{1,\ 2,\ 3\}$ such that $k\in I_i$ for $i\in \mathcal{A}_k$ and $k\notin I_j$ for $j\notin \mathcal{A}_k$.
\end{proof}

Explicitly presenting this result in the form of a piecewise function is arduous and not essential to the main argument, but in general, if we focus on the big picture, then we can compare the values of $2\sqrt{k}$, $x$, and $2\sqrt{xy-k}$. It is clear that
    \[
        \min\{2\sqrt{k},\ x,\ 2\sqrt{xy-k}\}=\begin{cases}
            2\sqrt{k} & if\ 1\le k\le \frac{x^2}{4}, \\
            x & if\ \frac{x^2}{4}<k\le xy-\frac{x^2}{4}, \\
            2\sqrt{xy-k} & if\ xy-\frac{x^2}{4}<k\le xy.
        \end{cases}
    \]

Thus, if $x$ and $y$ are not very small, then
    \[
        \min_{|S|=k}|\partial_{P_x \boxtimes P_y}(S)|\approx\begin{cases}
            2\sqrt{k} & if\ 1\le k\le \frac{x^2}{4}, \\
            x & if\ \frac{x^2}{4}<k\le xy-\frac{x^2}{4}, \\
            2\sqrt{xy-k} & if\ xy-\frac{x^2}{4}<k\le xy.
        \end{cases}
    \]

To prove Corollary \ref{cor}, similarly, it suffices to consider those compressed $k$-element sets. 

\begin{proof}[Proof of Corollary \ref{cor}]
    Let $S$ be a compressed $k$-element set in $V(P_x \boxtimes P_{\N_0})$. Choose $r\in \N$ to be large enough such that $r\ge k+1$ and $xr-(x-1)^2-(x-1)\ge k+1$. We have $N[S]\subseteq \{(i,\ j): 0\le i\le x-1,\ 0\le j\le r-1\}$. Let $H\simeq P_x \boxtimes P_r$ be the subgraph of $P_x \boxtimes P_{\N_0}$ induced by $\{(i,\ j): 0\le i\le x-1,\ 0\le j\le r-1\}$. Then $\partial_{P_x \boxtimes P_{\N_0}}(S)=\partial_{H}(S)$. The minimum possible size of $\partial_{H}(S)$ is determined by Theorem \ref{finite}. Moreover, as $xr-(x-1)^2-(x-1)\ge k+1$, we know $k\notin I_3$. So, by comparing $I_1$ and $I_2$, we have
    \[
    \min_{|S|=k}|\partial_{P_x \boxtimes P_{\N_0}}(S)|=\min_{|S|=k}|\partial_{H}(S)|=\begin{cases}
        \alpha_1 & if\ k\le \min\{(x-1)^2,\ x-1\}, \\
        \min\{\alpha_1,\ \alpha_2\} & if\ x\le k\le (x-1)^2, \\
        \alpha_2 & if\ k\ge \max\{(x-1)^2+1,\ x\}.
    \end{cases}
    \]
    But for any $x\ge 2$, we have $\min\{(x-1)^2,\ x-1\}=x-1$ and $\max\{(x-1)^2+1,\ x\}=(x-1)^2+1$. Thus
    \[
    \min_{|S|=k}|\partial_{P_x \boxtimes P_{\N_0}}(S)|=\begin{cases}
        \alpha_1 & if\ k\le x-1, \\
        \min\{\alpha_1,\ \alpha_2\} & if\ x\le k\le (x-1)^2, \\
        \alpha_2 & if\ k\ge (x-1)^2+1.
    \end{cases}
    \]
\end{proof}

\section{Remarks}
One may wonder why we do not study the isoperimetric inequalities on the \emph{tensor products} of paths, where the tensor product of two graphs $G$ and $H$, denoted by $G\times H$, is the graph with vertex set $\{(v_1,\ v_2):\ v_1\in G,\ v_2\in H\}$ where two vertices $(v_1,\ v_2)$ and $(v_1',\ v_2')$ are adjacent if and only if $v_1v_1'\in E(G)$ and $v_2v_2'\in E(H)$. 

This is because we usually only study the isoperimetric inequalities on connected graphs, but $\times_{i=1}^n P_{\N_0}$ and $\times_{i=1}^n P_{m_i}$ with $m_i\ge 2$ are not connected. The fact that $P_{m_1}\times P_{m_2}$ is not connected was also mentioned in \cite{HT}. Here we present a more general proposition.

\begin{proposition}
    For any $n\ge 2$ and $m_1,\ m_2,\ ...,\ m_n\ge 2$, each of $\times_{i=1}^n P_{\N_0}$ and $\times_{i=1}^n P_{m_i}$ has at least $2^{n-1}$ components.
\end{proposition}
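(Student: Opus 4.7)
The plan is to exploit a parity invariant that is forced by the tensor-product adjacency. In $\times_{i=1}^n P_{\N_0}$ (and in $\times_{i=1}^n P_{m_i}$), two vertices $v=(v_1,\ldots,v_n)$ and $v'=(v_1',\ldots,v_n')$ are adjacent exactly when $|v_i - v_i'| = 1$ for \emph{every} $i$. Consequently, traversing a single edge flips the parity of every coordinate simultaneously. Define the parity map
\begin{align*}
    \pi\colon V \to \{0,1\}^n, \qquad \pi(v) = (v_1 \bmod 2,\ v_2 \bmod 2,\ \ldots,\ v_n \bmod 2),
\end{align*}
and, for $\epsilon \in \{0,1\}^n$, write $\overline{\epsilon}$ for its bitwise complement. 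Along any edge, $\pi(v')=\overline{\pi(v)}$, so along a walk of length $\ell$ the parity vector either equals $\pi(v)$ (if $\ell$ is even) or $\overline{\pi(v)}$ (if $\ell$ is odd). In particular, if $v$ and $v'$ lie in the same component, then $\pi(v') \in \{\pi(v),\ \overline{\pi(v)}\}$.

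Thus the unordered pair $\{\pi(v),\overline{\pi(v)}\}$ is constant on each component. The number of such unordered pairs is $2^{n-1}$: the involution $\epsilon \mapsto \overline{\epsilon}$ on $\{0,1\}^n$ is fixed-point free (as $n \ge 1$), so it partitions the $2^n$ parity vectors into $2^{n-1}$ two-element orbits. It remains to check that every orbit is realized, so that the $2^{n-1}$ classes yield $2^{n-1}$ distinct nonempty unions of components. Since $m_i \ge 2$ for all $i$, each coordinate admits both the value $0$ and the value $1$, so for any target parity vector $\epsilon \in \{0,1\}^n$ we can pick a vertex $v$ with $\pi(v)=\epsilon$. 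Hence every parity orbit contains at least one vertex, giving at least $2^{n-1}$ components of $\times_{i=1}^n P_{\N_0}$ and of $\times_{i=1}^n P_{m_i}$.

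There is no real obstacle here: once the parity-flip observation is made, the counting is routine, and the hypothesis $m_i \ge 2$ is used only to ensure that each parity orbit is actually hit. (This also matches the proposition's lower bound rather than an equality, so we do not need to analyze how many components each parity class splits into.) The writeup would consist of stating the adjacency rule, defining $\pi$, verifying the invariance of $\{\pi(v),\overline{\pi(v)}\}$ under walking along edges, counting the orbits of the complement involution, and noting nonemptiness.
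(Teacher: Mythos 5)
Your proof is correct and takes essentially the same approach as the paper: the paper likewise observes that every edge flips the parity of all $n$ coordinates, deduces that connected vertices agree either in every coordinate parity or in none, and then exhibits the $2^{n-1}$ pairwise-disconnected vertices $(c_1,\ \ldots,\ c_{n-1},\ 0)$ with $c_j\in\{0,\ 1\}$, which is precisely a transversal of your complement-involution orbits. Your packaging via the invariant pair $\{\pi(v),\ \overline{\pi(v)}\}$ and the fixed-point-free involution is only a more abstract bookkeeping of the identical parity argument.
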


\begin{proof}
    As $\times_{i=1}^n P_{m_i}$ is a subgraph of $\times_{i=1}^n P_{\N_0}$, we only need to prove this proposition for $\times_{i=1}^n P_{\N_0}$. If two vertices $(u_1,\ u_2,\ ...,\ u_n)$ and $(v_1,\ v_2,\ ...,\ v_n)$ in $\times_{i=1}^n P_{\N_0}$ are adjacent, then for any $1\le i\le n$, we have $|u_i-v_i|=1$, which means $u_i$ and $v_i$ have different parity. So if two vertices $(a_1,\ a_2,\ ...,\ a_n)$ and $(b_1,\ b_2,\ ...,\ b_n)$ are connected, then either $a_i$ and $b_i$ have the same parity for any $1\le i\le n$, or $a_i$ and $b_i$ have different parity for any $1\le i\le n$. So any two vertices in $\{(c_1,\ c_2,\ ...,\ c_{n-1},\ 0):\ c_j\in \{0,\ 1\}\ \forall\ 1\le j\le n-1\}$ are not connected, and we have at least $2^{n-1}$ components.
\end{proof}

For future goals, we can try extending the result in Theorem \ref{finite} to $\boxtimes_{i=1}^n P_{m_i}$.

\begin{problem}
    Given $n,\ k,\ m_1,\ m_2,\ ...,\ m_n\in \N$ with $n\ge 3$ and $1\le k\le \prod_{i=1}^n m_i$, determine $\min_{|S|=k}|\partial_{\boxtimes_{i=1}^n P_{m_i}}(S)|$.
\end{problem}

The first case we can consider is the symmetric $3$-dimensional case.

\begin{problem}
    Given $k,\ m\in \N$ with $1\le k\le m^3$, determine $\min_{|S|=k}|\partial_{P_m\boxtimes P_m\boxtimes P_m}(S)|$.
\end{problem}

\section*{Acknowledgments}
I thank Dmitry Tsarev for bringing reference \cite{VR} to my attention.

\section*{Data availability}
There are no data associated with this paper.

\section*{Declarations}
\textbf{Competing Interests:} The author declares that he has no competing interests.


\begin{thebibliography}{1}

\bibitem{BC}
B.~V.~S. Bharadwaj and L.~Sunil~Chandran.
\newblock Bounds on isoperimetric values of trees.
\newblock {\em Discrete Math.}, 309(4):834--842, 2009.

\bibitem{Bo}
S.~G. Bobkov.
\newblock An isoperimetric inequality on the discrete cube, and an elementary proof of the isoperimetric inequality in {G}auss space.
\newblock {\em Ann. Probab.}, 25(1):206--214, 1997.

\bibitem{BL1}
B.~Bollob\'as and I.~Leader.
\newblock An isoperimetric inequality on the discrete torus.
\newblock {\em SIAM J. Discrete Math.}, 3(1):32--37, 1990.

\bibitem{BL2}
B.~Bollob\'as and I.~Leader.
\newblock Compressions and isoperimetric inequalities.
\newblock {\em J. Combin. Theory Ser. A}, 56(1):47--62, 1991.

\bibitem{HS}
C.~Houdr\'e and T.~Stoyanov.
\newblock Expansion and isoperimetric constants for product graphs.
\newblock {\em Combinatorica}, 26(4):455--473, 2006.

\bibitem{HT}
C.~Houdr\'e and P.~Tetali.
\newblock Isoperimetric invariants for product {M}arkov chains and graph products.
\newblock {\em Combinatorica}, 24(3):359--388, 2004.

\bibitem{IK}
A.~Iamphongsai and T.~Kittipassorn.
\newblock An isoperimetric inequality and pursuit-evasion games on triangular grid graphs.
\newblock {\em Discrete Appl. Math.}, 365:1--12, 2025.

\bibitem{VR}
E.~Veomett and A.~J. Radcliffe.
\newblock Vertex isoperimetric inequalities for a family of graphs on {$\mathbb{Z}^k$}.
\newblock {\em Electron. J. Combin.}, 19(2):Paper 45, 18, 2012.

\bibitem{WW}
D.~L. Wang and P.~Wang.
\newblock Discrete isoperimetric problems.
\newblock {\em SIAM J. Appl. Math.}, 32(4):860--870, 1977.

\end{thebibliography}
\end{document}